\theoremstyle{definition}
\newtheorem{theorem}{Theorem}[section]
\newtheorem{lemma}{Lemma}[section]
\newtheorem{corollary}{Corollary}[section]
\newtheorem{definition}{Definition}[section]
\title{A Propositional Linear Time Logic with Time Flow Isomorphic to $\omega^2$}
\author{Bojan Marinković}
\address{Mathematical Institute of the Serbian Academy of Sciences and Arts, Serbia}
\email{\url{bojanm@mi.sanu.ac.rs}}
\author{Zoran Ognjanović}
\address{Mathematical Institute of the Serbian Academy of Sciences and Arts, Serbia}
\author{Dragan Doder}
\address{University of Belgrade, Faculty of Mechanical Engineering, Serbia}
\author{Aleksandar Perović}
\address{University of Belgrade, Faculty of Transportation and Traffic Engineering, Serbia}
\begin{document}
\maketitle

\begin{abstract}

Primarily guided with the idea to express zero-time transitions by means of temporal propositional language,
we have developed a temporal logic where the time flow is isomorphic to ordinal $\omega^2$ (concatenation of $\omega$ copies of $\omega$). If we think of
$\omega^2$ as lexicographically ordered $\omega\times \omega$, then any particular zero-time transition can be represented
by states whose indices are all elements of some $\{n\}\times\omega$. In order to express non-infinitesimal transitions, we have introduced
a new unary temporal operator $[\omega]  $ ($\omega$-jump), whose effect on the time flow is the same as the effect of $\alpha\mapsto \alpha+\omega$ in $\omega^2$.
In terms of lexicographically ordered $\omega\times \omega$, $[\omega]  \phi$ is satisfied in $\langle i,j\rangle$-th time instant iff $\phi$ is
satisfied in $\langle i+1,0\rangle$-th time instant. Moreover, in order to formally capture the natural semantics of the until operator $\mathtt U$, we have introduced
a local variant $\mathtt u$ of the until operator. More precisely, $\phi\,\mathtt u \psi$ is satisfied in $\langle i,j\rangle$-th time instant iff $\psi$
is satisfied in $\langle i,j+k\rangle$-th time instant for some nonnegative integer $k$, and $\phi$ is satisfied in $\langle i,j+l\rangle$-th time instant for all
$0\leqslant l<k$.
As in many of our previous publications, the leitmotif is the usage of infinitary inference rules
in order to achieve the strong completeness.

\textbf{Keywords}: Temporal logic, Zero time transitions, Axiomatization, Strong completeness, Decidability 
\end{abstract}

\section{Introduction}

In \cite{GMM99}, A. Gargantini, D. Mandrioli and A. Morzenti have
proposed a general framework for formal description of systems
with so called zero-time transitions, illustrated through Petri
nets as state machines and TRIO as assertion language. The key
novelty in their approach of modeling zero-time transitions was
introduction of infinitesimals in the time flow. More precisely,
they have adopted and operationalized a natural assumption that
transitions of any particular system from one state to the next
are not instantaneous but infinitesimal with respect to the
execution time of the entire system. From the logician's point of
view, we may see \cite{GMM99} as a skilful application of the
interpretation method, particularly if we analyze proofs of the
correctness (or adequacy) of proposed modeling. In the recent
companion paper \cite{FMMR12}, L. Ferrucci, D. Mandrioli, A.
Morzenti and M. Rossiuses use concepts from non-standard analysis
and provide notions of micro and macro steps in an extension of
the TRIO metric temporal general-purpose specification language.

As a natural consequence of our research background and scientific taste, which is in large part focused on probability logic, temporal logic and tame fragments of $L_{\omega_1}$ and $L_{\omega_1,\omega}$
logics (among those are admissible fragments in Barwise sense, see \cite{B75}), inspired by the work presented in \cite{GMM99} we have decided to develop a discrete linear time temporal propositional logic adequate
for modeling zero-time transitions. Following the concept of non-instantaneous transitions of a system and discrete linear time model, we end up with the time flow isomorphic to concatenation of $\omega$ copies
of $\omega$, i.e. with $\omega^2$ as the model of the time flow.

Arguably, the most intuitive representation of the ordinal $\omega^2$ is the lexicographically ordered $\omega\times\omega$. For our purpose, changes of the first coordinate represent different states of a system,
while the changes of the second coordinate represent transitions from one state to the next. Hence, it was natural to introduce the following temporal operators:

\begin{itemize}

\item $[\omega]  $. It represents next state of a system. In the terms of a time flow, it corresponds to the operation $\alpha\mapsto \alpha+\omega$ on $\omega^2$. Semantically,
$[\omega]   \phi$ is satisfied in the $\langle i,j\rangle$-th time instant iff $\phi$ is satisfied in the $\langle i+1,0\rangle$-th time instant;

\item $[1]$. It represent the infinitesimal change of a system within some state. In terms of a time flow, it behaves like the usual next operator: $[1] \phi$ is satisfied in the
$\langle i,j\rangle$-th moment iff $\phi$ is satisfied in the $\langle i,j+1\rangle$-th moment;

\item $\mathtt U$. It represent the adequate generalization of the
until operator from $\omega$ to $\omega^2$. Semantically,
$\phi\,\mathtt U\,\psi$ is satisfied in the $\langle
i,j\rangle$-th moment iff there is $\langle
k,l\rangle\geqslant_{\rm lex}\langle i,j\rangle$ so that $\psi$ is
satisfied in the $\langle k,l\rangle$-th moment, and for all
$\langle i,j\rangle \leqslant_{\rm lex} \langle r,s\rangle<_{\rm
lex}\langle k,l\rangle$, $\phi$ is satisfied in $\langle
r,s\rangle$-th moment. Here $\leqslant_{\rm lex}$ denotes
lexicographical ordering;

\item $\mathtt u$. It is a local version of the until operator. Semantically, $\phi\,\mathtt u\,\psi$ is satisfied in the
$\langle i,j\rangle$-th moment iff there is a nonnegative integer $k$ such that $\psi$ is satisfied in the $\langle i,j+k\rangle$-th moment, and for all $l<k$, $\phi$
is satisfied in $\langle i,j+l\rangle$-th moment.

\end{itemize}
The main technical results are the proofs of the completeness theorem and determination of the complexity for the satisfiability procedure (PSPACE).

\subsection{Related work}

The present paper can be classified as a research related to discrete linear time temporal logics, with particular application on system descriptions and handling zero-time transitions in petri nets.
For modal and temporal part,
we refer the reader to \cite{BAPM83,Bur78,Bur82,Bur79,Bur84,EC82,EH82,Eme90,Fel84,gabbay2,HJ94,LS82,Pnu77,Pri57,Rey01}.
The infinitary techniques presented here (application of infinitary inference rules in order to overcome inherited noncompactness) are connected with our previous research, see \cite{DOM10,DMOPR10,Ogn06}. Decidability argumentation presented here is the modification of the work of A. Sistla and E. Clarke presented in \cite{SC85}.
The motivation for this particular modification of discrete linear time temporal logic has come from the research of A. Gargantini, D. Mandrioli and A. Morzenti that was presented in \cite{GMM99}.

\section{Syntax and semantics}

Let $Var=\{p_n\ |\ n\in\omega\}$ be the set of propositional variables. The set $For$ of all $L([1], [\omega]  ,\mathtt u,\mathtt U)$-formulas
is the smallest superset of $Var$ that is closed under the following formation rules:

\begin{itemize}

\item $\phi\mapsto *\phi$, where $*\in\{\lnot, [1], [\omega]  \}$;

\item $\langle \phi,\psi\rangle\mapsto (\phi * \psi)$, where $*\in \{\land, \mathtt u, \mathtt U\}$.

\end{itemize}
As it is usual, in order to simplify notation we will use the standard convention of omission of parentheses and the standard priority of connectives
(all unary connectives have the greater priority than any binary connective; connectives of the same arity have identical priority).
From now on, by a formula we will mean an $L([1], [\omega]  ,\mathtt u,\mathtt U)$-formula. Formulas will be denoted by $\phi$, $\psi$
and $\theta$, indexed if necessary. The remaining logical and temporal connectives $\lor$, $\to$, $\leftrightarrow$, $\mathtt f$, $\mathtt g$, $\mathtt F$ and $\mathtt G$
are defined in the usual way:

\begin{itemize}

\item $\phi\lor\psi =_{\rm def}\lnot (\lnot \phi \land \lnot \psi)$;

\item $\phi \to \psi =_{\rm def} \lnot \phi \lor\psi$;

\item $ \phi \leftrightarrow \psi =_{\rm def} (\phi\to\psi)\land (\psi\to\phi)$;

\item $\mathtt f\phi =_{\rm def} (\phi\to\phi)\mathtt u \phi$;

\item $\mathtt g\phi =_{\rm def} \lnot\mathtt f\lnot\phi$;

\item $\mathtt F\phi =_{\rm def} (\phi\to\phi)\mathtt U \phi$;

\item $\mathtt G\phi =_{\rm def} \lnot\mathtt F\lnot\phi$;

\item $[a]^0\phi =_{\rm def}\phi$; $[a]^{n+1}\phi=_{\rm def}[a][a]^n\phi$, $a\in \{1,\omega\}$.

\end{itemize}
Nonempty sets of formulas will be called theories.

The time flow is isomorphic to the ordinal $\omega\cdot\omega$ (recall that in ordinal arithmetics $\omega\cdot\omega$ is a concatenation of $\omega$ copies of $\omega$).
Instead of $\langle \omega\cdot\omega,\in\rangle$, as a canonical ordering we will use lexicographically ordered $\omega\times \omega$. Recall that
$
\langle i,j\rangle\leqslant_{\rm lex}\langle k,l\rangle
$
iff $i<k$, or $i=k$ and $j\leqslant l$. If the context is clear, we will omit ``lex" from the subscript. From now on, the elements of $\omega\times \omega$
would be referred as time instants, and will be denoted by $\mathbf r$ , $\mathbf s$ and $\mathbf t$, indexed or primed if necessary.
In particular, $\mathbf r$ should be regarded as $\langle r_1,r_2\rangle$, $\mathbf s$ should be regarded as $\langle s_1,s_2\rangle$ and so on.

A model is any function (evaluation) of the form $\xi:\omega\times\omega\times Var \longrightarrow \{0,1\}$. Models will be denoted by
$\xi$, $\eta$ and $\zeta$, indexed if necessary.

\begin{definition}
\label{satisfiability relation}
Let $\xi:\omega\times\omega\times Var \longrightarrow \{0,1\}$. We define the predicate $\xi\models_{\mathbf r}\phi$, which reads ``a model $\xi$ satisfies formula
$\phi$ in the $\mathbf r$-th moment (or in the $\mathbf r$-th time instant)", recursively on the complexity of formulas as follows:

\begin{enumerate}

\item $\xi\models_\mathbf r p_n \Leftrightarrow_{\rm def} \xi(\mathbf r,p_n)=1$;

\item $\xi\models_\mathbf r\lnot\phi\Leftrightarrow_{\rm def}\xi\not\models_\mathbf r\phi$;

\item $\xi\models_\mathbf r\phi\land\psi \Leftrightarrow_{\rm def}$ $\xi\models_\mathbf r\phi$ and
$\xi\models_\mathbf r\psi$;

\item $\xi\models_\mathbf r[1]\phi\Leftrightarrow_{\rm def} \xi\models_{\langle r_1,r_2+1\rangle}\phi$;

\item $\xi\models_\mathbf r [\omega]  \phi \Leftrightarrow_{\rm def} \xi\models_{\langle r_1+1,0 \rangle}\phi$;

\item $\xi\models_\mathbf r\phi\, \mathtt u \psi\Leftrightarrow_{\rm def}$ there exists $k\in\omega$ such that
$\xi\models_{\langle r_1,r_2+k\rangle}\psi$ and $\xi\models_{\langle r_1,r_2+i\rangle}\phi$ for all $0\leqslant i<k$;

\item $\xi\models_\mathbf r\phi\, \mathtt U\, \psi\Leftrightarrow_{\rm def}$ there exists $\mathbf s\geqslant \mathbf r$
such that $\xi\models_\mathbf s \psi$ and $\xi\models_\mathbf t\phi$ for all $\mathbf r\leqslant \mathbf t<\mathbf s$.
\hfill $\square$

\end{enumerate}

\end{definition}
A formula $\phi$ is satisfiable iff there exist a model $\xi$ and a time-instant $\mathbf t$ so that $\xi\models_\mathbf t\phi$.
A formula $\phi$ is valid iff $\xi\models_\mathbf t\phi$ for all $\xi$ and all $\mathbf t$. For instance, an immediate
consequence of (2) and (3) of Definition \ref{satisfiability relation} is validity of any substitutional instance of any classical tautology.
A slightly less trivial example of a valid formula is $[1] {[\omega]  }\phi \leftrightarrow [\omega]  \phi$. Indeed,

\begin{eqnarray*}
\xi\models_\mathbf t [1] [\omega]   \phi &\Leftrightarrow& \xi\models_{\langle t_1,t_2+1\rangle}[\omega]  \phi\\
&\Leftrightarrow& \xi\models_{\langle t_1+1,0\rangle}\phi\\
&\Leftrightarrow& \xi\models_\mathbf t[\omega]  \phi.
\end{eqnarray*}
If $T$ is a theory, then $\xi\models_\mathbf t T$ means that $\xi\models_\mathbf t\phi$ for all $\phi\in T$, while
$T\models\phi$ means that, for all $\xi$ and all $\mathbf t$, $\xi\models_\mathbf t T$ implies
$\xi\models_\mathbf t \phi$. We say that a theory $T$ is satisfiable iff there exist a model $\xi$ and a time-instant $\mathbf t$
so that $\xi\models_\mathbf t T$. A theory $T$ is finitely satisfiable iff all finite subsets of $T$ are satisfiable.

\begin{theorem}
\label{noncompactness}
The compactness theorem fails for $L([1],[\omega]  ,\mathtt u,\mathtt U)$.
\end{theorem}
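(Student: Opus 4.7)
The natural strategy is to exhibit a theory $T$ that is finitely satisfiable but not satisfiable, directly refuting compactness. The feature of the logic to exploit is that the \emph{local} operator $\mathtt f$ only asserts existence of a witness inside one ``state'' (fixed first coordinate), while the unary iterates $[1]^n$ explicitly name every instant within that state. This parallels the standard non-compactness argument for classical LTL over $\omega$, and motivates taking
\[
T = \{\mathtt f p\}\cup \{[1]^n \lnot p : n\in\omega\},
\]
where $p$ is any fixed propositional variable.

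For unsatisfiability I would argue as follows: suppose $\xi\models_{\mathbf t} T$ for some model $\xi$ and some time instant $\mathbf t$. Unfolding $\mathtt f p=(p\to p)\mathtt u p$ via clause (6) of Definition \ref{satisfiability relation}, and using that $p\to p$ is always satisfied, there exists $k\in\omega$ with $\xi\models_{\langle t_1,t_2+k\rangle} p$. On the other hand, a routine induction on $n$ using clause (4) gives $\xi\models_{\mathbf t}[1]^n\lnot p$ iff $\xi\models_{\langle t_1,t_2+n\rangle}\lnot p$; applying this with $n=k$ contradicts the previous conclusion.

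For finite satisfiability, fix a finite $T_0\subseteq T$. Let $N$ be the maximum $n$ such that $[1]^n\lnot p\in T_0$, or $N=0$ if no such formula appears in $T_0$. Define $\xi$ by $\xi(\mathbf r,q)=1$ iff $q=p$ and $\mathbf r=\langle 0,N+1\rangle$. Then at $\langle 0,0\rangle$ the formula $\mathtt f p$ is satisfied via the witness $k=N+1$, and for every $[1]^n\lnot p\in T_0$ we have $n\leqslant N$, so $\xi\not\models_{\langle 0,n\rangle}p$, i.e. $\xi\models_{\langle 0,0\rangle}[1]^n\lnot p$. Hence $\xi\models_{\langle 0,0\rangle} T_0$.

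I do not foresee a serious obstacle: once the right theory is identified, both verifications reduce to direct unfoldings of Definition \ref{satisfiability relation}. The only point requiring care is the choice of the local operator $\mathtt f$ rather than the global $\mathtt F$; with $\mathtt F$ one would need witnesses of the form $[\omega]^k[1]^n\lnot p$ to name every instant in $\omega\times\omega$, which still works but is less economical. Using $\mathtt f$, a single sequence $\{[1]^n\lnot p\}_{n\in\omega}$ already exhausts all candidate witnesses for $\mathtt f p$.
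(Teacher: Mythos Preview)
Your proof is correct. Both the unsatisfiability and the finite-satisfiability arguments go through exactly as you describe.

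Your route differs from the paper's in the choice of operators. The paper works with the \emph{global} future operator and exhibits
\[
T=\{\mathtt F\lnot p_0\}\cup\{[\omega]^m[1]^n p_0\mid m,n\in\omega\},
\]
i.e.\ precisely the two-parameter variant you mention at the end as ``less economical''. You instead use the \emph{local} operator $\mathtt f$ together with the one-parameter family $\{[1]^n\lnot p\}$, which confines the whole argument to a single $\omega$-stick and mirrors the classical LTL-over-$\omega$ counterexample verbatim. Your version is shorter and avoids any interaction between $[1]$ and $[\omega]$; the paper's version, by contrast, exercises both nexts and the full $\omega\times\omega$ indexing, which perhaps better advertises the specific time flow of this logic, but at the cost of a bulkier theory and a slightly more delicate description of the $\mathtt F$-witness. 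Either choice suffices, and your closing remark already anticipates the comparison accurately.
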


\begin{proof}
Let
$$
T=\{\mathtt F\lnot p_0\}\cup \{[\omega]  ^m[1]^n p_0\ |\ m,n\in\omega\}.
$$
If $\xi\models_\mathbf t \mathtt F\lnot p_0$, then there exist $m,n\in\omega$ so that $\xi\models_{\langle t_1+m,t_2+n\rangle}\lnot p_0$.
As a consequence, $\xi\not\models_\mathbf t[\omega]  ^m[1]^n p_0$, so $T$ is unsatisfiable. It remains to show that $T$ is finitely satisfiable.

Let $S$ be a nonempty finite subset of $T$. Since $S$ is finite, there exists a positive integer $k$ such that $k>\max(m,n)$ for all formulas
of the form $[\omega]  ^m[1]^n p_0$ that appears in $S$. Let $\xi$ be any model such that $\xi(\mathbf r,p_0)=1$ for all $\mathbf r<\langle k,0\rangle$
and $\xi(k,0,p_0)=0$. Then, $\xi\models_{\langle 0,0\rangle}S,\mathtt F\lnot p_0$, so $S$ is satisfiable.
\end{proof}

\section{Complete axiomatization}

Since $[1]$ and $[\omega]$ are essentially modal operators with some additional properties (self-duality for example), one natural way to approach the construction of
a syntactical counterpart $\vdash$ of satisfiability relation $\models$ is to determine sufficient but reasonable amount of properties that enables the construction of the standard monster model (the set of worlds is the set of all saturated theories). In particular, one must provide that $T\vdash \phi$ implies $[a]T=\{[a]\psi\ |\ \psi\in T\}\vdash [a]\phi$, $a\in\{1,\omega\}$ and validity of deduction theorem ($T\vdash \phi\to \psi$ iff $T,\phi\vdash \psi$).

However, our situation is significantly simpler than the general one. Firstly, all our models have exactly the same frame $\omega^2$. Secondly, we can syntactically define satisfiability of propositional letters in any node by formulas of the form $[\omega]^m[1]^np$. Hence, if $T$ is any complete theory, then it is quite reasonable to expect that the function $\xi_T:\omega\times\omega\times Var\longrightarrow \{0,1\}$ defined by
$$
\xi_T(m,n,p)=1\ {\rm iff}\ T\vdash [\omega]^m[1]^np
$$
is a model of $T$ in a sense that $\xi_T\models_{\langle m,n\rangle}\phi$ iff $T\vdash [\omega]^m[1]^n\phi$ (in particular, $\xi_T\models_{\langle 0,0\rangle}T$).

So, our definition of $\vdash$ will incorporate sufficient amount of semantical properties of $\models$ to ensure formal proof of the fact that $\xi_T$ described above is a model of a complete theory $T$.

\subsection{Axioms and inference rules}

The axioms of $L([1],[\omega],\mathtt u,\mathtt U)$ are all instances of the following seven schemata:

\begin{enumerate}

\item[A1] Tautology instances;

\item[A2] ${[1] {[\omega] \phi}} \leftrightarrow [\omega]   \phi$;

\item[A3] $\lnot [a]\phi \leftrightarrow [a]\lnot\phi$, $a\in\{1,\omega\}$;

\item[A4] $[a](\phi *\psi)\leftrightarrow ([a]\phi * [a]\psi)$, $[a]\in\{1, \omega\}$ and $*\in\{\land, \lor, \to, \leftrightarrow\}$;

\item[A5] $\psi \to (\phi\,\mathtt u\,\psi)$;

\item[A6] $\phi\,\mathtt u\,\psi\to \phi\,\mathtt U\,\psi$;

\item[A7] $\left(\bigwedge_{k=0}^n [1]^k(\phi\land \lnot \psi)\land [1]^{n+1}\psi\right)\to \phi\,\mathtt u\, \psi$;

\item[A8] $\left(\bigwedge_{k=0}^n [\omega]^k \mathtt g(\phi\land \lnot \psi)\land [\omega]^{n+1}\phi\,\mathtt u\,\psi\right)\to \phi\,\mathtt U\, \psi$.

\end{enumerate}

A1 reflects the fact that all tautology instances are valid. A2 captures the interplay between $[1]$ and $[\omega]$, which in ordinal arithmetics can be stated as $1+\omega=\omega$. A3 and A4 reflect self-duality of both $[1]$ and $[\omega]$. A5, A6, A7 and A8 capture the $\Rightarrow$ part in the following characterization of $\mathtt u$ and $\mathtt U$:

\begin{itemize}

\item $\lnot(\phi\,\mathtt u\,\psi) \Leftrightarrow \lnot\psi \land \bigwedge_{n\in\omega}\bigvee_{k=0}^n [1]^k(\lnot \phi \lor \psi)\lor [1]^{n+1}\lnot\psi$;

\item $\lnot(\phi\,\mathtt U\,\psi) \Leftrightarrow \lnot(\phi\,\mathtt u\,\psi) \land \bigwedge_{n\in\omega}\bigvee_{k=0}^n[\omega]^k\lnot\mathtt g(\phi\land \lnot\psi)\lor [\omega]^{n+1}\lnot(\phi\,\mathtt u\,\psi)$.

\end{itemize}

The inference rules of $L([1],[\omega],\mathtt u, \mathtt U)$ are the following four rules:

\begin{itemize}

\item[R1] Modus ponens: from $\phi$ and $\phi\to \psi$ infer $\psi$;

\item[R2] Necessitation: from $\phi$ infer $[a]\phi$, $a\in\{1,\omega\}$;

\item[R3] $\mathtt u$-rule: from the set of premises
$$
\{\theta\to\lnot \psi\}\cup\left\{\theta\to\bigvee_{k=0}^n [1]^k(\lnot \phi \lor \psi)\lor [1]^{n+1}\lnot\psi\ |\ n\in\omega\right\}
$$ infer $\theta \to \lnot(\phi\,\mathtt u\,\psi)$;

\item[R4] $\mathtt U$-rule: from the set of premises
$$
\{\theta\to\lnot(\phi\,\mathtt u\,\psi)\}\cup\left\{\theta\to\bigvee_{k=0}^n [\omega]^k\lnot \mathtt g(\phi \land \lnot\psi)\lor [\omega]^{n+1}\lnot(\phi\,\mathtt u\,\psi)\ |\ n\in\omega\right\}
$$
infer $\theta \to \lnot(\phi\,\mathtt U\,\psi)$.

\end{itemize}

Modus ponens and necessitation have the same meaning as in any modal logic with multiple modal operators, see for instance \cite{Str92}. Rules R3 and R4 reflect the $\Leftarrow$ part
of the characterization of $\mathtt u$ and $\mathtt U$.

\subsection{Basic proof theory}

The inference relation $\vdash$ is defined as follows:

\begin{definition}

We say that an $L([1],[\omega],\mathtt u,\mathtt U)$-formula $\phi$ is a \emph{theorem} and write $\vdash \phi$ iff there exists at most countably infinite sequence of $L([1],[\omega],\mathtt u,\mathtt U)$-formulas
$\phi_0,\dots,\phi_\alpha$ (the ordering type of $\phi_0,\dots,\phi_\alpha$ is $\alpha+1$, where $\alpha$ is any countable ordinal) such that $\phi_\alpha=\phi$ and for all $\beta\leqslant \alpha$, $\phi_\beta$ is instance of some axiom, or $\phi_\beta$ can be obtained from some previous members of the sequence by application of some inference rule.
\hfill $\square$

\end{definition}

\begin{definition}

Suppose that $T$ is any $L([1],[\omega],\mathtt u,\mathtt U)$-theory and that $\phi$ is any $L([1],[\omega],\mathtt u,\mathtt U)$-formula. We say that $\phi$ is syntactical consequence of $T$ (or that $\phi$ is deducible or derivable from $T$) and write $T\vdash \phi$ iff there exists at most countably infinite sequence of $L([1],[\omega],\mathtt u,\mathtt U)$-formulas
$\phi_0,\dots,\phi_\alpha$ such that $\phi_\alpha=\phi$ and for all $\beta\leqslant \alpha$, $\phi_\beta$ is instance of some axiom, $\phi_\beta\in T$, or $\phi_\beta$ can be obtained from some previous members of the sequence by application of some inference rule, where the application of necessitation is restricted to theorems.
\hfill $\square$
\end{definition}

An immediate consequence of previous two definitions is the fact that structural rules cut ($T\vdash \Gamma$, $\Gamma\vdash\phi$ implies $T\vdash \phi$) and weakening ($T\vdash \phi$ implies $T,\psi\vdash \phi$) are true for the introduced consequence relation $\vdash$. The soundness theorem ($T\vdash \phi$ implies $T\models\phi$) can be straightforwardly proved by the induction on the length of the inference. Let us prove deduction theorem for $L([1],[\omega],\mathtt u, \mathtt U)$.

\begin{theorem}[Deduction theorem]
$T\vdash \phi\to\psi$ iff $T,\phi\vdash \psi$.

\end{theorem}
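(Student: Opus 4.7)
The approach is the standard induction on the length of the derivation of $\psi$ from $T\cup\{\phi\}$, with the usual easy $(\Rightarrow)$ direction first: if $T\vdash \phi\to\psi$, then by weakening $T,\phi\vdash \phi\to\psi$, and modus ponens with $T,\phi\vdash\phi$ yields $T,\phi\vdash\psi$. The substantive direction is $(\Leftarrow)$. Fix a derivation $\phi_0,\dots,\phi_\alpha$ of $\psi=\phi_\alpha$ from $T\cup\{\phi\}$ of countable length, and prove by transfinite induction on $\beta\leqslant\alpha$ that $T\vdash \phi\to\phi_\beta$.

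For the base/easy cases, if $\phi_\beta$ is an axiom instance or $\phi_\beta\in T$, then $T\vdash\phi_\beta$ and the tautology $\phi_\beta\to(\phi\to\phi_\beta)$ together with R1 give $T\vdash\phi\to\phi_\beta$; if $\phi_\beta=\phi$, use the tautology $\phi\to\phi$. For modus ponens (some $\phi_\gamma=\chi$ and $\phi_\delta=\chi\to\phi_\beta$ earlier in the sequence), the induction hypothesis delivers $T\vdash\phi\to\chi$ and $T\vdash\phi\to(\chi\to\phi_\beta)$, and the tautology $(\phi\to\chi)\to((\phi\to(\chi\to\phi_\beta))\to(\phi\to\phi_\beta))$ combined with two applications of R1 gives the conclusion. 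The necessitation case $\phi_\beta=[a]\chi$ is where the side condition in the definition of $\vdash$ is essential: since R2 applications are restricted to theorems, $\chi$ is a theorem, hence $\phi_\beta=[a]\chi$ is also a theorem, so $T\vdash\phi_\beta$ and a tautology gives $T\vdash\phi\to\phi_\beta$. Without this restriction the deduction theorem would fail in the usual way.

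The main obstacle is the two infinitary rules R3 and R4. Consider R3: suppose $\phi_\beta$ has the form $\theta\to\lnot(\alpha\,\mathtt u\,\gamma)$, obtained from the infinitary premise set
\[
\{\theta\to\lnot\gamma\}\cup\Bigl\{\theta\to\bigvee_{k=0}^n[1]^k(\lnot\alpha\lor\gamma)\lor[1]^{n+1}\lnot\gamma\ \big|\ n\in\omega\Bigr\},
\]
each member appearing earlier in the sequence. By the induction hypothesis, $T\vdash\phi\to(\theta\to\lnot\gamma)$ and $T\vdash\phi\to\bigl(\theta\to\bigvee_{k=0}^n[1]^k(\lnot\alpha\lor\gamma)\lor[1]^{n+1}\lnot\gamma\bigr)$ for every $n\in\omega$. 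The key trick is to propositionally reshuffle the antecedent: each of these is equivalent (via tautological manipulation and R1) to the same formula with antecedent $(\phi\land\theta)$ in place of $\theta$. Now apply R3 with $\theta':=\phi\land\theta$ to infer $T\vdash(\phi\land\theta)\to\lnot(\alpha\,\mathtt u\,\gamma)$, and reshuffle back to obtain $T\vdash\phi\to(\theta\to\lnot(\alpha\,\mathtt u\,\gamma))=T\vdash\phi\to\phi_\beta$. The case of R4 is handled identically, replacing the premise set of R3 by that of R4. Thus the induction goes through, completing the proof.
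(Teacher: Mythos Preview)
Your proof is correct and follows essentially the same approach as the paper: the easy direction via weakening and modus ponens, the hard direction by (transfinite) induction on the derivation, handling necessitation via the restriction to theorems, and handling the infinitary rules R3 and R4 by the reshuffling trick $\phi\to(\theta\to\cdot)\ \Leftrightarrow\ (\phi\land\theta)\to\cdot$ so that the rule can be reapplied with antecedent $\phi\land\theta$. Your write-up is in fact slightly more careful than the paper's (which has a minor labeling slip between the $\Rightarrow$ and $\Leftarrow$ directions).
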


\begin{proof}
The $\Leftarrow$ part is a straightforward consequence of weakening and modus ponens. We will prove the converse implication by induction on the length of the inference.

If $\psi$ is an axiom instance or $\psi\in T$, then $T\vdash \psi$, so since $T\vdash \psi\to(\phi\to\psi)$ (A1), by modus ponens $T\vdash \psi\to \psi$. If $\psi=\phi$, then by A1, $T\vdash \phi\to\phi$.

Suppose that $\psi$ is a theorem. Then, $\vdash [a]\psi$, so by weakening we have that $T\vdash [a]\psi$, so $T\vdash \phi\to [a]\psi$. Thus, we have verified that the $\Rightarrow$ part is preserved
under the application of necessitation.

Suppose that $T\vdash \phi \to (\theta\to \lnot \psi_2)$ and $T\vdash \phi \to(\theta\to \bigvee_{k=0}^n[1]^k(\lnot \psi_1\lor \psi_2)\lor [1]^{n+1}\lnot\psi_2)$ for all $n\in \omega$. Since $p\to(q\to r)$ is
equivalent to $p\land q \to r$, we have that $T\vdash \phi\land \theta \to \lnot \psi_2$ and $T\vdash \phi\land \theta \to \bigvee_{k=0}^n[1]^k(\lnot \psi_1\lor \psi_2)\lor [1]^{n+1}\lnot\psi_2)$. By $\mathtt u$-rule,
$T\vdash \phi\land \theta \to \lnot (\psi_1\,\mathtt u\,\psi_2)$, i.e. $T\vdash \phi\to(\theta\to \lnot(\psi_1\,\mathtt u\,\psi_2))$. Hence, we have verified that the $\Rightarrow$ part is preserved under the application of the
$\mathtt u$-rule. The remaining step (application of the $\mathtt U$-rule) can be proved similarly.
\end{proof}

\subsection{Lindenbaum's theorem}

Roughly speaking, the standard maximization argument in the proof of Lindenbaum's theorem goes as follows: we start with a consistent theory and in consecutive steps we extend it with a single formula or with its negation (depending which choice preserves consistency), until we have exhausted all formulas. Due to the presence of infinitary inference rules, we additionally have to check in each step whether the current formula that is incompatible with the current theory can be derived by application of R3 or R4. If that is the case, we have to block at least one premise. Detailed construction is given below.

\begin{lemma}
\label{lemma Lindenbaum 1}
Suppose that $T$ is a consistent theory and that $T\vdash \lnot (\theta\to \lnot(\phi\,\mathtt u\,\psi))$. Then, $T,\lnot(\theta\to\lnot\psi)$ is consistent, or there exists a nonnegative integer $m$ such that $T, \lnot\left(\theta\to\bigvee_{k=0}^m[1]^k(\lnot\phi\lor\psi)\lor [1]^{m+1}\lnot\psi\right)$ is consistent.

\end{lemma}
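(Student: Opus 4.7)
The plan is a direct proof by contraposition (equivalently, by contradiction), exploiting the fact that the $\mathtt u$-rule R3 is precisely calibrated to match the two cases in the conclusion of the lemma.

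First I would assume, toward contradiction, that \emph{both} alternatives fail: $T,\lnot(\theta\to\lnot\psi)$ is inconsistent, and for every $m\in\omega$ the theory
$$T,\lnot\bigl(\theta\to\textstyle\bigvee_{k=0}^m[1]^k(\lnot\phi\lor\psi)\lor [1]^{m+1}\lnot\psi\bigr)$$
is inconsistent. Then I would invoke the standard fact that $T,\chi$ inconsistent implies $T\vdash \lnot\chi$ (this follows from the deduction theorem proved just above, plus the tautology $\lnot\chi\to(\chi\to\bot)$, or more directly from the observation that an inconsistent theory derives everything, including $\lnot\lnot\chi\to \bot$, and then via deduction theorem and double-negation, $T\vdash \lnot\chi$). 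Applying this, together with double-negation elimination (A1), I obtain
$$T\vdash \theta\to\lnot\psi \quad\text{and}\quad T\vdash \theta\to\textstyle\bigvee_{k=0}^m[1]^k(\lnot\phi\lor\psi)\lor [1]^{m+1}\lnot\psi\ \text{ for every }m\in\omega.$$

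These are exactly the premises of the $\mathtt u$-rule R3 (instantiated with the $\theta$, $\phi$, $\psi$ of the lemma statement). Hence R3 yields $T\vdash \theta\to\lnot(\phi\,\mathtt u\,\psi)$. Combining this with the hypothesis $T\vdash\lnot(\theta\to\lnot(\phi\,\mathtt u\,\psi))$, the theory $T$ proves both a formula and its negation, so $T$ is inconsistent — contradicting the assumption of the lemma. Therefore at least one of the two alternatives must hold.

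The main (and only slightly delicate) point is the passage from ``$T,\chi$ inconsistent'' to ``$T\vdash\lnot\chi$'': because of the infinitary rules R3 and R4, one must check that this classical equivalence still goes through in the present system. It does, because the deduction theorem has already been established in full generality (including the $\mathtt u$-rule and $\mathtt U$-rule steps), and the rest is pure tautological reasoning via A1 and modus ponens. Once that bridge is in place, the argument is simply that R3 is the syntactic dual of the semantic characterization of $\lnot(\phi\,\mathtt u\,\psi)$, so blocking the conclusion of R3 forces at least one of its premises to be blockable — which is precisely what the lemma asserts.
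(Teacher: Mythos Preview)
Your proof is correct and follows essentially the same route as the paper's: assume both alternatives fail, pass from ``$T,\lnot\chi$ inconsistent'' to ``$T\vdash\chi$'', feed the resulting premises into R3, and contradict the consistency of $T$. The paper's version is terser (it omits the justification via the deduction theorem that you spell out), but the argument is the same.
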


\begin{proof}
If $T,\lnot(\theta\to\lnot\psi)$ and $T,\lnot\left(\theta\to\bigvee_{k=0}^m[1]^k(\lnot\phi\lor\psi)\lor [1]^{m+1}\lnot\psi\right)$ are inconsistent for all $m\in \omega$, then $T\vdash \theta\to\lnot \psi$ and
$T\vdash\theta\to \bigvee_{k=0}^m [1]^k(\lnot\phi\lor\psi)\lor [1]^{m+1}\lnot\psi$ for all $m\in \omega$, so by R3, $T\vdash \theta\to\lnot(\phi\,\mathtt u\,\psi)$, which contradicts
the fact that $T$ is consistent.
\end{proof}

\begin{lemma}
\label{lemma Lindenbaum 2}
Suppose that $T$ is a consistent theory and that $T\vdash \lnot(\theta\to\lnot(\phi\,\mathtt U\,\psi))$. Then, $T,\lnot(\theta\to\lnot(\phi\,\mathtt u\,\psi))$ is consistent, or there exists  a nonnegative integer $m$ such that
$T, \lnot\left(\theta\to\bigvee_{k=0}^m[\omega]^k\lnot\mathtt g(\phi\land\lnot\psi)\lor [\omega]^{m+1}\lnot(\phi\,\mathtt u\,\psi)\right)$ is consistent.
\end{lemma}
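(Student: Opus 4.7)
The plan is to mirror precisely the argument given for Lemma \ref{lemma Lindenbaum 1}, replacing the inference rule R3 with R4. The proof should proceed by contradiction: assume both alternatives fail, and then derive $T\vdash \theta\to\lnot(\phi\,\mathtt U\,\psi)$ via R4, which contradicts consistency of $T$ together with the hypothesis $T\vdash\lnot(\theta\to\lnot(\phi\,\mathtt U\,\psi))$.

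Concretely, I would first note the elementary fact that in our setting, if $T,\lnot\chi$ is inconsistent, then $T\vdash \chi$. This uses the deduction theorem just proved, together with classical reasoning (from $T,\lnot\chi\vdash \bot$ we obtain $T\vdash\lnot\chi\to\bot$, hence $T\vdash \chi$ by A1 and modus ponens). Applying this twice: inconsistency of $T,\lnot(\theta\to\lnot(\phi\,\mathtt u\,\psi))$ yields $T\vdash \theta\to\lnot(\phi\,\mathtt u\,\psi)$, and for each $m\in\omega$, inconsistency of $T$ together with the negation of the $m$-th disjunctive premise yields $T\vdash \theta\to\bigvee_{k=0}^m[\omega]^k\lnot\mathtt g(\phi\land\lnot\psi)\lor [\omega]^{m+1}\lnot(\phi\,\mathtt u\,\psi)$.

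Next I would invoke R4 on exactly this collection of conclusions. The premises of R4 are precisely $\{\theta\to\lnot(\phi\,\mathtt u\,\psi)\}$ together with the family indexed by $n\in\omega$, and the conclusion is $\theta\to\lnot(\phi\,\mathtt U\,\psi)$. Hence $T\vdash \theta\to\lnot(\phi\,\mathtt U\,\psi)$. Combined with the hypothesis $T\vdash\lnot(\theta\to\lnot(\phi\,\mathtt U\,\psi))$, classical reasoning gives $T\vdash\bot$, contradicting the assumed consistency of $T$.

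I do not anticipate any serious obstacle: the structure is entirely parallel to Lemma \ref{lemma Lindenbaum 1}, with R4 replacing R3 and the family of premises updated accordingly. The only conceptual point worth flagging is the standard move from ``$T,\lnot\chi$ inconsistent'' to ``$T\vdash\chi$'', which is legitimate here because the deduction theorem has already been established and no application of necessitation to a non-theorem is required in this step.
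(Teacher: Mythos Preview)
Your proposal is correct and follows exactly the approach the paper takes: the paper's own proof consists of the single sentence ``Similar as the proof of Lemma \ref{lemma Lindenbaum 1}, with the use of R4 instead of R3,'' and your write-up is precisely the unpacking of that sentence.
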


\begin{proof}
Similar as the proof of Lemma \ref{lemma Lindenbaum 1}, with the use of R4 instead of R3.
\end{proof}

\begin{theorem}[Lindenbaum's theorem]
\label{theorem Lindenbaum}
Every consistent theory can be maximized, i.e. extended to a complete theory.
\end{theorem}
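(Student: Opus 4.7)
The plan is a Henkin-style maximization adapted to the infinitary rules R3 and R4. Since the language is countable, fix an enumeration $\phi_0,\phi_1,\phi_2,\dots$ of all formulas and build an increasing chain $T=T_0\subseteq T_1\subseteq T_2\subseteq\cdots$ of consistent theories, setting $T^*=\bigcup_{n\in\omega}T_n$ at the end.

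At stage $n+1$ I proceed as follows. If $T_n\cup\{\phi_n\}$ is inconsistent, then $T_n\vdash\lnot\phi_n$, and I put $T_{n+1}=T_n\cup\{\lnot\phi_n\}$. Otherwise I first pass to $T_n'=T_n\cup\{\phi_n\}$ and then branch on the shape of $\phi_n$: if $\phi_n=\lnot(\theta\to\lnot(\alpha\,\mathtt u\,\beta))$, Lemma \ref{lemma Lindenbaum 1} (applied to $T_n'$, which proves $\phi_n$) lets me adjoin either $\lnot(\theta\to\lnot\beta)$ or some $\lnot(\theta\to\bigvee_{k=0}^m[1]^k(\lnot\alpha\lor\beta)\lor[1]^{m+1}\lnot\beta)$ while preserving consistency, and I set $T_{n+1}$ to this extension; symmetrically, if $\phi_n=\lnot(\theta\to\lnot(\alpha\,\mathtt U\,\beta))$, I use Lemma \ref{lemma Lindenbaum 2}; in every other case I simply set $T_{n+1}=T_n'$. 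Each $T_n$ is consistent by construction, and every formula is decided at its own enumeration stage, so for every $\phi$ at least one of $\phi,\lnot\phi$ lies in $T^*$.

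The main — essentially the only — obstacle is to verify that the union $T^*$ is itself consistent, since derivations in our calculus can have length up to any countable ordinal and R3 and R4 admit premise sets of size $\aleph_0$, so the familiar ``a derivation uses only finitely many assumptions, hence lives in some $T_n$'' argument is unavailable. I would handle this by transfinite induction on derivation length, proving closure of $T^*$ under $\vdash$: axioms and members of $T^*$ lie trivially in $T^*$, modus ponens is finitary, and necessitation is restricted to theorems and so contributes no new assumption from $T^*$; the only nontrivial cases are R3 and R4. Suppose $T^*$ derives $\theta\to\lnot(\alpha\,\mathtt u\,\beta)$ by R3, so that by induction every premise is already in $T^*$. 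If the conclusion itself failed to lie in $T^*$, its negation $\lnot(\theta\to\lnot(\alpha\,\mathtt u\,\beta))$ was enumerated as some $\phi_n$ and the construction above responded by placing the negation of one of R3's premises into $T_{n+1}\subseteq T^*$; taking the least $m$ at which both that premise and its negation are present gives $T_m$ inconsistent, contradicting the construction. The R4 case is handled identically via Lemma \ref{lemma Lindenbaum 2}. Once closure under $\vdash$ is established, consistency of $T^*$ follows because $\bot$ is never added at any stage, and maximality then comes for free from the enumeration.
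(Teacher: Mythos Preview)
Your proposal is correct and follows essentially the same approach as the paper's own proof: build the chain by deciding each formula at its enumeration stage, add a blocking witness whenever the negation of an R3/R4 conclusion enters the theory (you trigger this when $\phi_n=\lnot(\theta\to\lnot(\alpha\,\mathtt u\,\beta))$ is compatible with $T_n$, the paper when $\phi_n=\theta\to\lnot(\alpha\,\mathtt u\,\beta)$ is incompatible---dual but equivalent triggers), and then argue consistency of the union by showing it is closed under each inference rule. The paper leaves the transfinite induction on derivation length implicit while you spell it out, but the substance of the argument is the same.
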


\begin{proof}
Let $T$ be a consistent theory and let $\langle \phi_n\ |\ n\in\omega\rangle$ be one enumeration of the set of all $L([1],[\omega],\mathtt u,\mathtt U)$-formulas. We will inductively define the sequence
$\langle T_n\ |\ n\in\omega\rangle$ of theories as follows:

\begin{enumerate}

\item $T_0=T$;

\item If $T_n$ is compatible with $\phi_n$ ($T_n\cup \{\phi_n\}$ is consistent), then let $T_{n+1}=T_n\cup \{\phi_n$\};

\item If $T_n$ is incompatible with $\phi_n$ ($T_n\cup \{\phi_n\}$ is inconsistent) and $\phi_n\neq \theta\to\lnot(\psi_1\,\mathtt u\,\psi_2),\theta\to\lnot(\psi_1\,\mathtt U\,\psi_2)$, then let
$T_{n+1}=T_n\cup \{\lnot \phi_n\}$;

\item If $T_n$ is incompatible with $\phi_n$ and $\phi_n=\theta\to\lnot(\psi_1\,\mathtt u\,\psi_2)$, then let $T_{n+1}=T_n \cup\{\lnot\phi_n,\lnot\left(
\theta\to \bigvee_{k=0}^m[1]^k(\lnot\phi\lor\psi)\lor [1]^{m+1}\lnot\psi\right)\}$, where $m$ is the smallest nonnegative integer such that $T_{n+1}$ is consistent. If there is no such $m$, then by Lemma \ref{lemma Lindenbaum 1}, $T_n \cup \{\lnot \phi_n, \lnot(\theta\to \lnot\psi)\}$ is consistent, so let $T_{n+1}=T_n\cup\{\lnot \phi_n, \lnot(\theta\to \lnot\psi)\}$;

\item If $T_n$ is incompatible with $\phi_n$ and $\phi_n=\theta\to\lnot(\psi_1\,\mathtt U\,\psi_2)$, then let $T_{n+1}=T_n\cup\{\lnot\phi_n,\lnot\left(
\theta\to \bigvee_{k=0}^m[\omega]^k\lnot\mathtt g(\phi\land\lnot\psi)\lor [\omega]^{m+1}\lnot(\phi\,\mathtt u\,\psi)\right)\}$, where $m$ is the le\-a\-st nonnegative integer such that $T_{n+1}$ is consistent. If there is no such $m$, then by Lemma \ref{lemma Lindenbaum 2}, $T_n\cup\{\lnot \phi_n, \lnot(\theta\to \lnot(\phi\,\mathtt u\,\psi))\}$ is consistent, so let $T_{n+1}=T_n\cup\{\lnot \phi_n, \lnot(\theta\to \lnot(\phi\,\mathtt u\,\psi))\}$.

\end{enumerate}
Note that each $T_n$ is consistent. Let $T_\omega=\bigcup_{n\in\omega}T_n$. Clearly, $T_\omega \vdash \phi$ or $T_\omega\vdash\lnot\phi$ for any formula $\phi$. It remains to show consistency of $T_\omega$. In order to do so, it is sufficient to show that $T_\omega$ is deductively closed. Case (2) ensures that all axiom instances and all theorems are in $T_\omega$. Since necessitation can be applied only on theorems, $T_\omega$ is closed under its application. It remains to show that $T_\omega$ is closed under modus ponens, $\mathtt u$-rule and $\mathtt U$-rule.

MP: Let $\phi,\phi\to\psi\in T_\omega$. Then, there exist $k,m\in \omega$ so that $\phi\in T_k$ and $\phi\to\psi\in T_m$. Let $\psi=\phi_n$ and let $l=k+m+n+1$. By construction of $T_\omega$, $T_k,T_m,T_{n+1}\subseteq T_l$, so by modus ponens, $T_l\vdash \psi$. Since $T_l$ is consistent, $\psi=\phi_n$ and $T_n\subseteq T_l$, $T_n$ and $\psi$ are compatible, so $T_{n+1}=T_n,\psi$, i.e. $\psi\in T_\omega$.

$\mathtt u$-rule: Let $\{\theta\to\lnot\psi\}\cup\{\theta\to \bigvee_{i=0}^m[1]^i(\lnot\phi\lor\psi)\lor [1]^{m+1}\lnot\psi\ |\ m\in\omega\}\subseteq T_\omega$ and let $\theta\to\lnot(\phi\,\mathtt u\,\psi)=\phi_k$. Suppose that
$\theta\to\lnot(\phi\,\mathtt u\,\psi)\notin T_\omega$. Then, $T_{k+1}=T_k,\lnot \phi_k,\lnot(\theta\to \lnot\psi)$ or
$T_{k+1}=T_k,\lnot\phi_k,\lnot(\theta\to \bigvee_{i=0}^m[1]^i(\lnot\phi\lor\psi)\lor [1]^{m+1}\lnot\psi)$. Now, for sufficiently large index $n$ we have that $T_n$ contains both $\theta_1$ and $\lnot \theta_1$,
where $\theta_1=\theta\to\lnot \psi$ or  $\theta_1=\theta\to \bigvee_{i=0}^m[1]^i(\lnot\phi\lor\psi)\lor [1]^{m+1}\lnot\psi$, which contradicts consistency of $T_n$.

$\mathtt U$-rule: Similarly as the previous case.
\end{proof}

\subsection{Completeness theorem}

Due to the fact that we have established validity of Lindenbaum's theorem for $L([1],[\omega],\mathtt u,\mathtt U)$, it remains to show that each complete theory $T$ is satisfiable. Thus,
trough the rest of this section $T$ will be a complete theory.

\begin{definition}

Let $T$ be a complete theory. The canonical model $\xi_T$ is defined by
$$
\xi_T(k,l,p)=1\ \Leftrightarrow_{\rm def}\ T\vdash [\omega]^k[1]^lp.
$$
\hfill $\square$
\end{definition}

We will break down the proof of the fact that $\xi_T\models_{\langle 0,0\rangle}T$ into the following five lemmas:

\begin{lemma}
\label{lemma completeness 1}
Let $T\vdash \phi\,\mathtt u\,\psi$. Then, $T\vdash \psi$ or there exists a nonnegative integer $m$ such that
$T\vdash \bigwedge_{i=0}^m[1]^i(\phi\land\lnot\psi)\land [1]^{m+1}\psi$.
\end{lemma}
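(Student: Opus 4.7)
The plan is to argue by contraposition using the $\mathtt u$-rule R3, which is essentially the formal dual of the statement we wish to prove. Completeness of $T$ will convert the non-provability of $\psi$ and of each conjunction into positive provability of their negations, and axiom A3 together with necessitation will let me rearrange those negations into the exact form of the premises of R3. With the premises in hand, R3 will produce $\lnot(\phi\,\mathtt u\,\psi)$, contradicting $T\vdash \phi\,\mathtt u\,\psi$ and the consistency of $T$.

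More concretely, I assume for contradiction that $T\vdash \phi\,\mathtt u\,\psi$, yet $T\not\vdash \psi$ and $T\not\vdash \bigwedge_{i=0}^{m}[1]^i(\phi\land\lnot\psi)\land [1]^{m+1}\psi$ for every $m\in\omega$. Since $T$ is complete (hence consistent and decides every formula), this yields $T\vdash \lnot\psi$ and $T\vdash \lnot\bigl(\bigwedge_{i=0}^{m}[1]^i(\phi\land\lnot\psi)\land [1]^{m+1}\psi\bigr)$ for every $m$. Applying De Morgan together with the propositional equivalence $\lnot(\phi\land\lnot\psi)\leftrightarrow(\lnot\phi\lor\psi)$, A3 to commute each $\lnot$ past $[1]^i$, and the derived replacement principle for $[1]^i$ (which follows from A4 and R2 by induction on $i$), I obtain, for every $m$,
\[
T\vdash \bigvee_{i=0}^{m}[1]^i(\lnot\phi\lor\psi)\lor [1]^{m+1}\lnot\psi.
\]
Now I fix a tautology $\theta$, for example $p_0\lor\lnot p_0$. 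Because $\vdash\theta$, the formulas $\theta\to\lnot\psi$ and $\theta\to\bigl(\bigvee_{i=0}^{m}[1]^i(\lnot\phi\lor\psi)\lor [1]^{m+1}\lnot\psi\bigr)$ are all derivable from $T$ (via A1 and MP), i.e.\ precisely the premises of R3 with the chosen $\theta$, $\phi$, $\psi$. Hence R3 delivers $T\vdash \theta\to\lnot(\phi\,\mathtt u\,\psi)$, and one more application of MP with $\vdash\theta$ gives $T\vdash \lnot(\phi\,\mathtt u\,\psi)$, contradicting the consistency of $T$.

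The only nontrivial bookkeeping is the rewriting step: verifying that inside the calculus the formula $\lnot\bigl(\bigwedge_{i=0}^{m}[1]^i(\phi\land\lnot\psi)\land [1]^{m+1}\psi\bigr)$ is provably equivalent to $\bigvee_{i=0}^{m}[1]^i(\lnot\phi\lor\psi)\lor [1]^{m+1}\lnot\psi$. This is routine but uses the self-duality axiom A3 essentially, since without the provability of $\lnot[1]\chi\leftrightarrow [1]\lnot\chi$ one cannot push the outer negation through the iterated $[1]^i$, and the statement would no longer be a straightforward contrapositive of R3.
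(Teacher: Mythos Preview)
Your argument is correct and follows essentially the same route as the paper's own proof: assume the conclusion fails, use completeness of $T$ to flip non-provability into provability of negations, rewrite via De~Morgan and A3 into the premises of R3 with $\theta$ an axiom instance, and derive $\lnot(\phi\,\mathtt u\,\psi)$ for a contradiction. The only difference is that you spell out the rewriting step (the role of A3 and A4 in pushing negation through iterated $[1]^i$) explicitly, whereas the paper takes it for granted.
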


\begin{proof}
Contrary to the assumption of the lemma, let $T\not\vdash \psi$ and $T\not\vdash \bigwedge_{i=0}^m[1]^i(\phi\land\lnot\psi)\land [1]^{m+1}\psi$ for all $m\in \omega$. Since $T$ is complete,
$T\vdash \lnot\psi$ and $T\vdash \bigvee_{i=0}^m[1]^i(\lnot\phi\lor\psi)\lor [1]^{m+1}\lnot\psi$ for all $m\in \omega$, so by R3 (for $\theta$ we can chose any axiom instance),
$T\vdash \lnot(\phi\,\mathtt u\,\psi)$, which contradicts consistency of $T$.
\end{proof}

\begin{lemma}
\label{lemma completeness 2}
Let $T\vdash \phi\,\mathtt U\,\psi$. Then, $T\vdash \phi\,\mathtt u\,\psi$ or there exists a nonnegative integer $m$ such that
$T\vdash \bigwedge_{i=0}^m[\omega]^i\mathtt g(\phi\land\lnot\psi)\land [\omega]^{m+1}(\phi\,\mathtt u\,\psi)$.
\end{lemma}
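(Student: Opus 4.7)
The plan is to mimic the proof of Lemma \ref{lemma completeness 1} verbatim, using rule R4 in place of R3 and $\mathtt U$ in place of $\mathtt u$. Concretely, I would argue by contradiction: assume both $T\not\vdash \phi\,\mathtt u\,\psi$ and $T\not\vdash \bigwedge_{i=0}^m [\omega]^i\mathtt g(\phi\land\lnot\psi)\land [\omega]^{m+1}(\phi\,\mathtt u\,\psi)$ for every $m\in\omega$. Since $T$ is complete, this gives $T\vdash \lnot(\phi\,\mathtt u\,\psi)$ together with
\[
T\vdash \bigvee_{i=0}^{m}[\omega]^{i}\lnot\mathtt g(\phi\land\lnot\psi)\lor [\omega]^{m+1}\lnot(\phi\,\mathtt u\,\psi)
\]
for every $m\in\omega$, using only the propositional equivalences that come from A1 (distributing negation over $\land$, $\lor$, and the bounded disjunction, and noting $\lnot\bigwedge\equiv \bigvee\lnot$).

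To feed these into R4 I need the premises in the shape $\theta\to(\cdots)$. Take $\theta$ to be any fixed tautological instance (say $p_0\to p_0$), exactly as the parenthetical remark in the proof of Lemma \ref{lemma completeness 1} suggests. Since $\theta$ is a theorem, $T\vdash \theta\to\chi$ is propositionally equivalent to $T\vdash \chi$, so we immediately obtain
\[
T\vdash \theta\to\lnot(\phi\,\mathtt u\,\psi) \qquad \text{and} \qquad T\vdash \theta\to \bigvee_{k=0}^{n}[\omega]^{k}\lnot\mathtt g(\phi\land\lnot\psi)\lor [\omega]^{n+1}\lnot(\phi\,\mathtt u\,\psi)
\]
for every $n\in\omega$. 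These are exactly the premises of R4, whose conclusion is $T\vdash \theta\to\lnot(\phi\,\mathtt U\,\psi)$. Combined with $T\vdash\theta$ and modus ponens, this yields $T\vdash \lnot(\phi\,\mathtt U\,\psi)$, which together with the hypothesis $T\vdash\phi\,\mathtt U\,\psi$ contradicts the consistency of $T$.

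There is really no serious obstacle here; the only step that requires any care is the passage from the propositionally complete form (negated bounded conjunction) to the exact syntactic shape demanded by R4, which is a routine tautological rewrite covered by A1. Because this proof is a mechanical transposition of Lemma \ref{lemma completeness 1} — swapping $[1]\leftrightarrow[\omega]$, $\mathtt u\leftrightarrow\mathtt U$, $\psi\leftrightarrow\phi\,\mathtt u\,\psi$, and R3$\leftrightarrow$R4 — the author's stated proof is very likely to simply refer back to Lemma \ref{lemma completeness 1}, and I would write it the same way.
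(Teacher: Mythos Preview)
Your proposal is correct and follows exactly the paper's own proof: argue by contradiction, use completeness of $T$ to pass to the negated forms, choose $\theta$ to be an axiom instance, and apply R4 to derive $T\vdash\lnot(\phi\,\mathtt U\,\psi)$, contradicting consistency. The paper's proof is in fact slightly terser than yours (and contains a minor typo in the second conjunct), but the argument is identical.
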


\begin{proof}
Contrary to the assumption of the lemma, let $T\not\vdash \phi\,\mathtt u\,\psi$ and $T\not\vdash \bigwedge_{i=0}^m[\omega]^i\mathtt g(\phi\land\lnot\psi)\land [\omega]^{m+1}\psi$ for all $m\in \omega$. Since $T$ is complete,
$T\vdash \lnot(\phi\,\mathtt u\,\psi)$ and $T\vdash \bigvee_{i=0}^m[\omega]^i\lnot\mathtt g(\phi\land\lnot\psi)\lor [\omega]^{m+1}\lnot(\phi\,\mathtt u\,\psi)$ for all $m\in \omega$, so by R4 (for $\theta$ we can chose any axiom instance),
$T\vdash \lnot(\phi\,\mathtt U\,\psi)$, which contradicts consistency of $T$.
\end{proof}

\begin{lemma}
\label{lemma completeness 3}
Let $\phi$ be any $L([1],[\omega])$-formula. Then, for all $\langle k,l\rangle\in \omega\times \omega$,
$$
\xi_T\models_{\langle k,l\rangle}\phi\ {\rm iff}\ T\vdash [\omega]^k[1]^l\phi.
$$
\end{lemma}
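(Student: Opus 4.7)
The plan is to prove this by induction on the complexity of $\phi$ as an $L([1],[\omega])$-formula; since $\mathtt u$ and $\mathtt U$ do not occur, only propositional variables, the Boolean connectives, and the two modal operators need to be handled. The base case $\phi = p \in Var$ is immediate from the definition of $\xi_T$: $\xi_T \models_{\langle k, l\rangle} p$ iff $\xi_T(k, l, p) = 1$ iff $T \vdash [\omega]^k[1]^l p$.

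For the Boolean cases, I would first record two derivable equivalences obtained by routine iteration of the axioms on top of R2 and A4: iterating A3 gives $\vdash [\omega]^k[1]^l \lnot \psi \leftrightarrow \lnot [\omega]^k[1]^l \psi$, and iterating A4 gives $\vdash [\omega]^k[1]^l(\psi_1 \land \psi_2) \leftrightarrow [\omega]^k[1]^l \psi_1 \land [\omega]^k[1]^l \psi_2$. Combined with the induction hypothesis and the completeness of $T$ (so that $T \not\vdash \chi$ is equivalent to $T \vdash \lnot\chi$), the cases $\phi = \lnot \psi$ and $\phi = \psi_1 \land \psi_2$ then follow by unfolding the definitions.

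For $\phi = [1]\psi$, clause (4) of Definition \ref{satisfiability relation} gives $\xi_T \models_{\langle k, l\rangle} [1]\psi$ iff $\xi_T \models_{\langle k, l+1\rangle} \psi$, and since $[\omega]^k[1]^l[1]\psi$ is syntactically identical to $[\omega]^k[1]^{l+1}\psi$, the induction hypothesis closes the case at once. For $\phi = [\omega]\psi$, clause (5) gives $\xi_T \models_{\langle k, l\rangle} [\omega]\psi$ iff $\xi_T \models_{\langle k+1, 0\rangle} \psi$, which by the induction hypothesis is equivalent to $T \vdash [\omega]^{k+1}\psi$; what remains is to bridge this with the required $T \vdash [\omega]^k[1]^l[\omega]\psi$.

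The main obstacle is precisely this last bridging step, where the exponent $l$ on $[1]$ must be absorbed. The key tool is A2, which asserts $\vdash [1][\omega]\chi \leftrightarrow [\omega]\chi$. A short side-induction on $l$ using R2 together with A4 applied to $\leftrightarrow$ yields $\vdash [1]^l[\omega]\psi \leftrightarrow [\omega]\psi$; $k$ further applications of R2 and A4 then propagate the equivalence under $[\omega]^k$, giving $\vdash [\omega]^k[1]^l[\omega]\psi \leftrightarrow [\omega]^{k+1}\psi$. Modus ponens now closes both directions of the $[\omega]\psi$ case, and the induction is complete. Apart from this A2-driven absorption, every step in the argument is mechanical bookkeeping.
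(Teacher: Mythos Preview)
Your proposal is correct and follows essentially the same route as the paper's proof: induction on the complexity of $L([1],[\omega])$-formulas, with the base case by definition of $\xi_T$, the Boolean cases via iterated A3 and A4 together with completeness of $T$, the $[1]$ case by syntactic identity $[\omega]^k[1]^l[1]\psi=[\omega]^k[1]^{l+1}\psi$, and the $[\omega]$ case via A2 to absorb the $[1]^l$ prefix. You are in fact more explicit than the paper about the side-inductions needed to push A2, A3, A4 through the outer $[\omega]^k[1]^l$ prefix using R2 and A4, whereas the paper simply labels the corresponding equivalence arrows with the axiom names.
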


\begin{proof}
We will use induction on the complexity of $L([1],[\omega])$-formulas. The case of propositional letters is covered by definition of $\xi_T$.

\begin{eqnarray*}
\xi_T\models_{\langle k,l\rangle}\lnot\phi &\Leftrightarrow& \xi_T\not\models_{\langle k,l\rangle} \phi\\
&\stackrel{IH}{\Leftrightarrow}& T\not\vdash [\omega]^k[1]^l\phi \\
&\Leftrightarrow& T\vdash \lnot[\omega]^k[1]^l\phi\\
&\stackrel{A3}{\Leftrightarrow}& T\vdash[\omega]^k[1]^l\lnot\phi;
\end{eqnarray*}

\begin{eqnarray*}
\xi_T\models_{\langle k,l\rangle}\phi\land\psi &\Leftrightarrow& \xi_T\models_{\langle k,l\rangle}\phi\ {\rm and}\
\xi_T\models_{\langle k,l\rangle}\psi\\
&\stackrel{IH}{\Leftrightarrow}& T\vdash [\omega]^k[1]^l\phi\ {\rm and}\ T\vdash [\omega]^k[1]^l\phi\\
&\Leftrightarrow& T\vdash([\omega]^k[1]^l\phi) \land ([\omega]^k[1]^l\psi)\\
&\stackrel{A4}{\Leftrightarrow}&T\vdash [\omega]^k[1]^l(\phi\land\psi);
\end{eqnarray*}

\begin{eqnarray*}
\xi_T\models_{\langle k,l\rangle}[1]\phi &\Leftrightarrow& \xi_T\models_{\langle k,l+1\rangle}\phi\\
&\stackrel{IH}{\Leftrightarrow}& T\vdash [\omega]^k[1]^{l+1}\phi\\
&\Leftrightarrow& T\vdash T\vdash [\omega]^k[1]^l[1]\phi;
\end{eqnarray*}

\begin{eqnarray*}
\xi_T\models_{\langle k,l\rangle}[\omega]\phi &\Leftrightarrow& \xi_T\models_{\langle k+1,0\rangle}\phi\\
&\stackrel{IH}{\Leftrightarrow}& T\vdash [\omega]^{k+1}\phi\\
&\Leftrightarrow& T\vdash [\omega]^k[\omega]\phi\\
&\stackrel{A2}{\Leftrightarrow}& T\vdash [\omega]^k[1]^l[\omega]\phi.
\end{eqnarray*}
\end{proof}

\begin{lemma}
\label{lemma completeness 4}
Let $\phi$ be any $L([1],[\omega],\mathtt u)$-formula. Then, for all $\langle k,l\rangle\in \omega\times \omega$,
$$
\xi_T\models_{\langle k,l\rangle}\phi\ {\rm iff}\ T\vdash [\omega]^k[1]^l\phi.
$$
\end{lemma}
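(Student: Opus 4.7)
I propose strong induction on the complexity of the $L([1],[\omega],\mathtt u)$-formula $\phi$. The cases for propositional variables, Boolean connectives, $[1]$ and $[\omega]$ carry over verbatim from Lemma~\ref{lemma completeness 3}, using only A2, A3 and A4. The only genuinely new case is $\phi=\phi_1\,\mathtt u\,\phi_2$, which I treat in two directions.

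For $(\Rightarrow)$, let $m\geq 0$ be the least index with $\xi_T\models_{\langle k,l+m\rangle}\phi_2$; by minimality, $\xi_T\models_{\langle k,l+i\rangle}\phi_1\land\lnot\phi_2$ for every $i<m$. The IH applied to the sub-formulas $\phi_1$ and $\phi_2$ (together with the already-settled Boolean case) yields $T\vdash[\omega]^k[1]^{l+m}\phi_2$ and $T\vdash[\omega]^k[1]^{l+i}(\phi_1\land\lnot\phi_2)$ for each $i<m$. If $m=0$, I apply necessitation to A5 and A4 to obtain $\vdash[\omega]^k[1]^l\phi_2\to[\omega]^k[1]^l(\phi_1\,\mathtt u\,\phi_2)$ and conclude by modus ponens. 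If $m\geq 1$, I apply necessitation to the A7 instance with $n=m-1$ and use A4 to push $[\omega]^k[1]^l$ through the conjunction and each $[1]^i$, then conclude by modus ponens against the assembled IH conjuncts.

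For $(\Leftarrow)$, I argue by contradiction. Suppose $T\vdash[\omega]^k[1]^l(\phi_1\,\mathtt u\,\phi_2)$ yet $\xi_T\not\models_{\langle k,l\rangle}\phi_1\,\mathtt u\,\phi_2$. Unpacking the negated semantics and appealing to the IH for $\phi_1,\phi_2$ together with the completeness of $T$, one obtains $T\vdash[\omega]^k[1]^l\lnot\phi_2$ and $T\vdash[\omega]^k[1]^l\bigl(\bigvee_{i=0}^n[1]^i(\lnot\phi_1\lor\phi_2)\lor[1]^{n+1}\lnot\phi_2\bigr)$ for every $n\in\omega$ (using A4 to move the prefix inside, A3 for duality, and the tautological weakening $[1]^i\lnot\phi_1\to[1]^i(\lnot\phi_1\lor\phi_2)$). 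These are precisely the R3 premises wrapped by $[\omega]^k[1]^l$. An application of R3, combined with A3 and A4 to lift the modal prefix through the resulting conclusion, yields $T\vdash[\omega]^k[1]^l\lnot(\phi_1\,\mathtt u\,\phi_2)$, which equals $\lnot[\omega]^k[1]^l(\phi_1\,\mathtt u\,\phi_2)$ by A3, contradicting the assumption together with consistency of $T$.

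The principal obstacle is exactly the final step of $(\Leftarrow)$: R3 as stated derives $\theta\to\lnot(\phi\,\mathtt u\,\psi)$, not $[\omega]^k[1]^l\lnot(\phi_1\,\mathtt u\,\phi_2)$. I would bridge this gap by verifying that the shifted theory $T^\ast=\{\alpha:T\vdash[\omega]^k[1]^l\alpha\}$ is itself a consistent complete theory, with closure under MP, necessitation on theorems, R3 and R4 established by lifting each rule through $[\omega]^k[1]^l$ using A3 and A4; the closure under R3 and R4 is the delicate point, since it requires showing that an infinitary family of premises translates correctly through the modal prefix. Once $T^\ast$ has been certified, Lemma~\ref{lemma completeness 1} applied to $T^\ast$ and the formula $\phi_1\,\mathtt u\,\phi_2\in T^\ast$ produces the required syntactic witness, which via the IH for $\phi_1,\phi_2$ forces $\xi_T\models_{\langle k,l\rangle}\phi_1\,\mathtt u\,\phi_2$ and so contradicts the assumed semantic failure.
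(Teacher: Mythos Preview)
Your forward direction is exactly the paper's: unfold the semantics, apply the induction hypothesis to the components, and recover $T\vdash[\omega]^k[1]^l(\phi_1\,\mathtt u\,\phi_2)$ via A5 (when $m=0$) or A7 (when $m\geq1$), pushing the prefix through with A4.

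For the backward direction the paper is far terser than you: it simply writes ``By A4 and Lemma~\ref{lemma completeness 1}'' to obtain directly that either $T\vdash[\omega]^k[1]^l\psi$ or $T\vdash[\omega]^k[1]^l\bigwedge_{i=0}^m[1]^i(\phi\land\lnot\psi)\land[1]^{m+1}\psi$ for some $m$, and then invokes the induction hypothesis. There is no shifted theory $T^\ast$, no discussion of lifting R3 through the prefix; the paper treats the passage from Lemma~\ref{lemma completeness 1} (stated for unprefixed $\phi\,\mathtt u\,\psi$) to the $[\omega]^k[1]^l$-prefixed version as routine, with A4 doing the bookkeeping.

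Your worry is well placed: R3 as stated produces $\theta\to\lnot(\phi\,\mathtt u\,\psi)$, not $[\omega]^k[1]^l\lnot(\phi\,\mathtt u\,\psi)$, and $[\omega]^k[1]^l(\phi\,\mathtt u\,\psi)$ is not in general equivalent to any $\phi'\,\mathtt u\,\psi'$ (in particular the obvious candidate $([\omega]^k[1]^l\phi)\,\mathtt u\,([\omega]^k[1]^l\psi)$ collapses when $k\geq1$ because of A2). Your proposed remedy via $T^\ast=\{\alpha:T\vdash[\omega]^k[1]^l\alpha\}$ is the natural way to make the paper's invocation rigorous, and completeness and consistency of $T^\ast$ follow immediately from A3 and the corresponding properties of $T$. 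But you stop exactly at the point that matters: closure of $T^\ast$ under R3 reduces to showing that from $T\vdash[\omega]^k[1]^l(\theta\to\lnot\psi)$ and $T\vdash[\omega]^k[1]^l(\theta\to D_n)$ for all $n$ one can derive $T\vdash[\omega]^k[1]^l(\theta\to\lnot(\phi\,\mathtt u\,\psi))$, and you have not supplied this argument. So your proposal names the gap the paper glosses over, proposes the right repair strategy, but leaves the repair itself unfinished; as written, neither your first attempt (direct R3) nor your second (via $T^\ast$) actually closes the $(\Leftarrow)$ direction.
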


\begin{proof}
As in the proof of the previous lemma, we will use the induction on the complexity of $L([1],[\omega],\mathtt u)$-formulas. By Lemma \ref{lemma completeness 3}, we only need to consider the case of $\mathtt u$-formulas.

Let $\xi_T\models_{\langle k,l\rangle} \phi\,\mathtt u\,\psi$. Then, $\xi_T\models_{\langle k,l\rangle}\psi$ or there exists a nonnegative integer $m$ such that
$\xi_T\models_{\langle k,l\rangle}\bigwedge_{i=0}^m[1]^i(\phi\land\lnot\psi)\land [1]^{m+1}\psi$. By induction hypothesis,
$T\vdash [\omega]^k[1]^l\psi$ or $T\vdash [\omega]^k[1]^l \bigwedge_{i=0}^m[1]^i(\phi\land\lnot\psi)\land [1]^{m+1}\psi$. By A4, A5 and A7, $T\vdash [\omega]^k[1]^l(\phi\,\mathtt u\,\psi)$.

Conversely, let $T\vdash [\omega]^k[1]^l(\phi\,\mathtt u\,\psi)$. By A4 and Lemma \ref{lemma completeness 1}, $T\vdash [\omega]^k[1]^l\psi$ or there exists a nonnegative integer $m$ such that
$T\vdash [\omega]^k[1]^l\bigwedge_{i=0}^m[1]^i(\phi\land\lnot\psi)\land [1]^{m+1}\psi$. By induction hypothesis, $\xi_T\models_{\langle k,l\rangle}\psi$ or
$\xi_T\models_{\langle k,l\rangle}\bigwedge_{i=0}^m[1]^i(\phi\land\lnot\psi)\land [1]^{m+1}\psi$, so by definition of $\models$, $\xi_T\models_{\langle k,l\rangle}\phi\,\mathtt u\,\psi$.
\end{proof}

\begin{lemma}
\label{lemma completeness 5}
Let $\phi$ be any $L([1],[\omega],\mathtt u,\mathtt U)$-formula. Then, for all $\langle k,l\rangle\in \omega\times \omega$,
$$
\xi_T\models_{\langle k,l\rangle}\phi\ {\rm iff}\ T\vdash [\omega]^k[1]^l\phi.
$$
\end{lemma}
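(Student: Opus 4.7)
The plan is to proceed by induction on the complexity of $\phi$. The atomic, Boolean and $[1]/[\omega]$ cases will go through exactly as in Lemma~\ref{lemma completeness 3}, and the $\mathtt u$ case exactly as in Lemma~\ref{lemma completeness 4}: those arguments only invoke the inductive hypothesis on immediate subformulas and are insensitive to whether the subformulas contain $\mathtt U$. Hence the only case that requires new work is $\phi=\phi_1\,\mathtt U\,\phi_2$.

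For $(\Rightarrow)$, I would start from $\xi_T\models_{\langle k,l\rangle}\phi_1\,\mathtt U\,\phi_2$ and use the well-ordering of $\omega\times\omega$ under $\leqslant_{\rm lex}$ to pick the least $\mathbf s\geqslant\langle k,l\rangle$ with $\xi_T\models_{\mathbf s}\phi_2$, then split on whether $\mathbf s$ shares the first coordinate of $\langle k,l\rangle$. If $\mathbf s=\langle k,l+n\rangle$, then $\xi_T\models_{\langle k,l\rangle}\phi_1\,\mathtt u\,\phi_2$ and the $\mathtt u$-case of the induction gives $T\vdash [\omega]^k[1]^l(\phi_1\,\mathtt u\,\phi_2)$; A6 together with necessitation, A4 and modus ponens then lifts this to $T\vdash [\omega]^k[1]^l(\phi_1\,\mathtt U\,\phi_2)$. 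Otherwise $\mathbf s=\langle k+m+1,n\rangle$ for some $m\geqslant 0$, and minimality forces $\phi_1\land\lnot\phi_2$ at every position in $[\langle k,l\rangle,\mathbf s)$, while $\phi_1\,\mathtt u\,\phi_2$ holds at $\langle k+m+1,0\rangle$. Unfolding the semantics of $\mathtt g$ and of $[\omega]^i$, this reads $\xi_T\models_{\langle k,l\rangle}\bigwedge_{i=0}^m[\omega]^i\mathtt g(\phi_1\land\lnot\phi_2)\land [\omega]^{m+1}(\phi_1\,\mathtt u\,\phi_2)$. Each conjunct is built from $\phi_1,\phi_2$ using only $\lnot,\land,[1],[\omega],\mathtt u$, so the inductive hypothesis combined with the arguments of Lemmas~\ref{lemma completeness 3} and~\ref{lemma completeness 4} yields the matching syntactic deduction under the prefix $[\omega]^k[1]^l$, after which A8, necessitation, A4 and modus ponens deliver $T\vdash [\omega]^k[1]^l(\phi_1\,\mathtt U\,\phi_2)$.

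For $(\Leftarrow)$, my plan is to emulate Lemma~\ref{lemma completeness 2} ``under the prefix $[\omega]^k[1]^l$''. I would introduce $T^{\ast}=\{\chi\mid T\vdash [\omega]^k[1]^l\chi\}$ and verify, using A3, A4, necessitation on theorems and the derivable equivalence $[a]\bot\leftrightarrow\bot$, that $T^{\ast}$ is a complete consistent theory, closed under the inference rules in the sense needed by the proof of Lemma~\ref{lemma completeness 2}. From the assumption $T\vdash [\omega]^k[1]^l(\phi_1\,\mathtt U\,\phi_2)$, i.e.\ $\phi_1\,\mathtt U\,\phi_2\in T^{\ast}$, that lemma applied to $T^{\ast}$ (with $\theta$ taken to be a tautology in R4) produces either $T^{\ast}\vdash\phi_1\,\mathtt u\,\phi_2$ or some $m\geqslant 0$ with $T^{\ast}\vdash\bigwedge_{i=0}^m[\omega]^i\mathtt g(\phi_1\land\lnot\phi_2)\land [\omega]^{m+1}(\phi_1\,\mathtt u\,\phi_2)$. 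The first subcase combines with the $\mathtt u$-case of the induction and soundness of A6 to give $\xi_T\models_{\langle k,l\rangle}\phi_1\,\mathtt U\,\phi_2$. In the second I would use A4 to distribute $[\omega]^k[1]^l$ over the conjunction and A2 to collapse each $[1]^l[\omega]^i$ with $i\geqslant 1$ into $[\omega]^i$ applied after $[\omega]^k$, so that the inductive hypothesis provides semantic witnesses at $\langle k,l\rangle$, at $\langle k+i,0\rangle$ for $1\leqslant i\leqslant m$, and at $\langle k+m+1,0\rangle$; these patch together into a witness $\langle k+m+1,n\rangle$ for $\phi_1\,\mathtt U\,\phi_2$ at $\langle k,l\rangle$.

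The hardest step will be the converse direction: A8 and R4 are stated at the root of a formula, so there is no way to apply them directly to $[\omega]^k[1]^l(\phi_1\,\mathtt U\,\phi_2)$. Passing to $T^{\ast}$ is exactly the device that absorbs the prefix and makes the Lindenbaum-style reasoning of Lemma~\ref{lemma completeness 2} reusable; verifying the closure properties of $T^{\ast}$ is the point where genuine care is required.
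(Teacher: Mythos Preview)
Your proposal takes essentially the paper's route: the paper proves this lemma in one sentence by invoking Lemmas~\ref{lemma completeness 2} and~\ref{lemma completeness 4}, and your write-up spells out precisely that induction, treating the $\mathtt U$ case exactly as Lemma~\ref{lemma completeness 4} treats $\mathtt u$ but with Lemma~\ref{lemma completeness 2} in place of Lemma~\ref{lemma completeness 1}. The auxiliary theory $T^{\ast}=\{\chi\mid T\vdash[\omega]^k[1]^l\chi\}$ is your own device for making rigorous the paper's terse ``by A4 and Lemma~\ref{lemma completeness 1}'' step in the proof of Lemma~\ref{lemma completeness 4}; it is not explicit in the paper, but it packages the same idea, and you are right that its closure properties (in particular, that deductions from $T^{\ast}$ using R3/R4 land back in $T^{\ast}$) are where the real work hides---the paper leaves this equally implicit.
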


\begin{proof}
A straightforward modification of the proof of the previous lemma based on lemmas \ref{lemma completeness 2} and \ref{lemma completeness 4}.
\end{proof}

\begin{corollary}
$\xi_T\models_{\langle 0,0\rangle}\phi$ iff $T\vdash \phi$.
\end{corollary}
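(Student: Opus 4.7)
The plan is to obtain this corollary as the specialization of Lemma \ref{lemma completeness 5} at the base time instant $\langle 0,0\rangle$. All of the work has already been done in the inductive proofs of lemmas \ref{lemma completeness 3}--\ref{lemma completeness 5}, which handle the propositional/modal connectives (via A2, A3, A4 and the definition of $\xi_T$) and the two temporal binary operators (via the maximization lemmas \ref{lemma completeness 1} and \ref{lemma completeness 2} together with A4--A8). So the only thing left to observe is that the general biconditional $\xi_T\models_{\langle k,l\rangle}\phi$ iff $T\vdash [\omega]^k[1]^l\phi$ collapses, when $k=l=0$, to the desired statement.

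Concretely, I would invoke the defining convention $[a]^0\phi =_{\rm def}\phi$ listed among the derived connectives, which gives $[\omega]^0[1]^0\phi = \phi$. Substituting $k=l=0$ into Lemma \ref{lemma completeness 5} then immediately yields $\xi_T\models_{\langle 0,0\rangle}\phi$ iff $T\vdash \phi$. There is no case analysis and no induction to rerun; the derivation is a single line. The ``main obstacle'' is really nonexistent at the corollary level, because every delicate point (self-duality of the modalities through A3, commutation with Boolean connectives through A4, the $1+\omega=\omega$ absorption through A2, and the closure under the infinitary rules R3 and R4) has already been absorbed into Lemma \ref{lemma completeness 5}.

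It is worth noting the role of this corollary: applied uniformly to every $\phi\in T$, it gives $\xi_T\models_{\langle 0,0\rangle}T$, so every complete theory is satisfied by its canonical model at the origin. Chaining this with Lindenbaum's theorem (Theorem \ref{theorem Lindenbaum}) then produces the usual strong completeness statement: if $T\not\vdash \phi$, the consistent theory $T\cup\{\lnot\phi\}$ extends to a complete $T^*$, and $\xi_{T^*}$ witnesses $T\not\models\phi$ at $\langle 0,0\rangle$. Thus the corollary, although formally a one-line consequence of the preceding lemma, is the piece that closes the completeness argument.
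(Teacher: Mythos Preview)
Your proposal is correct and matches the paper's own proof, which simply states that the corollary is an immediate consequence of Lemma \ref{lemma completeness 5}. Your additional remarks about the role of the corollary in closing the completeness argument are accurate elaborations but go beyond what the paper writes at this point.
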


\begin{proof}
An immediate consequence of Lemma \ref{lemma completeness 5}.
\end{proof}

Hence, we have proved the fact that each consistent theory is satisfiable, which concludes the proof of strong completeness theorem for $L([1],[\omega],\mathtt u,\mathtt U)$.

\subsection{Representation of zero-time transitions}

The main technical idea behind choosing $\omega^2$ as a model of the time flow is to separate states of the system and transitions from one state to the next. More precisely, time instants of the form $\langle k,0\rangle$ are reserved for temporal description of different states of a system, while transitions from $\langle k,0\rangle$ to $\langle k+1,0\rangle$ are temporally described throughout the $k$-th $\omega$-stick $\{\langle k,n\rangle\ |\ n\in\omega\}$. Recall that the state of a system in any time instant $\mathbf t$ is described by formulas satisfied in $\mathbf t$.

As it was described in \cite{GMM99}, any transition can occur within a closed time interval. If order to model this phenomenon, we need to provide the following two things:

\begin{itemize}

\item Any formula $\phi$ that is satisfied in all $\langle k,n\rangle$, $n\geqslant n_0$, should also be satisfied in $\langle k+1,0\rangle$;

\item Any change can occur only once.

\end{itemize}
This leads to additional transition axioms:

\begin{itemize}

\item[TR1] $\mathtt g\phi \to [\omega]\phi;$

\item[TR2] $(\phi\land [1]\lnot\phi)\to \mathtt g[1]\lnot\phi$.
\end{itemize}

Clearly, the strong completeness theorem holds for the extended system as well since both TR1 and TR2 can be seen as $L([1],[\omega],\mathtt u,\mathtt u)$-theories.

\section{Decidability}

In the process of showing decidability of the SAT problem for $L([1],[\omega]  ,\mathtt u,\mathtt U)$-formulas and determination
of the complexity, we will modify the argument presented by A. P. Sistla and E. M. Clarke in \cite{SC85}. By $Var(\phi)$ will be denoted the
set of all propositional variables appearing in $\phi$, while by ${\rm Sub}(\phi)$ will be denoted the set of all subformulas of $\phi$.
Note that
$$
|{\rm Sub}(\phi)|\leqslant {\rm lenght}(\phi),
$$
where $|{\rm Sub}(\phi)|$ is the cardinal number of ${\rm Sub}(\phi)$ and ${\rm lenght}(\phi)$ is the number of
symbols in $\phi$. Moreover, let
$$
{\rm Sub}(\phi,\mathbf t,\xi)=\{\psi\in {\rm Sub}(\phi)\ |\ \xi\models_\mathbf t\psi\}.
$$

\begin{lemma}
\label{deleting}
With notation as before, suppose that $\phi$, $\xi$, $\eta$, $\mathbf r$ and $\mathbf s$ satisfy the following conditions:

\begin{enumerate}

\item $\mathbf r<\mathbf s$;

\item ${\rm Sub}(\phi,\mathbf r,\xi)={\rm Sub}(\phi,\mathbf s,\xi)$;

\item $\eta(\mathbf t,p)=\xi(\mathbf t,p)$ for all $\mathbf t<\mathbf r$ and all $p\in Var$;

\item $\eta(r_1+i,r_2+j,p)=\xi(s_1+i,s_2+j,p)$ for all $i,j\in\omega$ and all $p\in Var$.

\end{enumerate}
Then, the following hold:

\begin{enumerate}

\item[(a)] ${\rm Sub}(\phi,\mathbf t,\xi)={\rm Sub}(\phi,\mathbf t,\eta)$ for all $\mathbf t<\mathbf r$;

\item[(b)] ${\rm Sub}(\phi,\langle s_1+i,s_2+j\rangle, \xi)={\rm Sub}(\phi, \langle r_1+i,r_2+j\rangle,\eta)$ for all $i,j\in\omega$.

\end{enumerate}

\end{lemma}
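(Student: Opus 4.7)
The plan is to prove (a) and (b) simultaneously by induction on the complexity of a formula $\psi \in \mathrm{Sub}(\phi)$, showing that if (a) and (b) hold for all proper subformulas of $\psi$, then they hold for $\psi$. The base case $\psi = p \in Var$ reduces to clauses (3) and (4) of the hypothesis: (3) gives (a) directly, while (4) specialized at the desired indices gives (b). The propositional cases $\psi = \lnot \chi$ and $\psi = \chi_1 \land \chi_2$ are entirely routine consequences of the semantic clauses and the inductive hypothesis.

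For the modal cases $\psi = [1]\chi$ and $\psi = [\omega]\chi$ in part (b), the computation is direct index-chasing: the $[1]$-jump shifts the second coordinate by one, and the $[\omega]$-jump depends only on the first coordinate, so in either case the corresponding instants $\langle s_1 + i, s_2 + j\rangle$ and $\langle r_1 + i, r_2 + j\rangle$ are transported to matching new pairs where IH (b) applies. For part (a), the subtlety is that applying $[1]$ or $[\omega]$ to an instant $\mathbf{t} < \mathbf{r}$ may land us either strictly below $\mathbf{r}$ (in which case IH (a) for $\chi$ suffices) or exactly at $\mathbf{r}$. At this boundary I bridge through $\mathbf{s}$: by condition (2), $\xi \models_{\mathbf r} \chi$ iff $\xi \models_{\mathbf s} \chi$ (using $\chi \in \mathrm{Sub}(\phi)$), and by IH (b) at $i = j = 0$, the latter is equivalent to $\eta \models_{\mathbf r} \chi$; the two equivalences chain together to deliver what is needed.

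For the until operators, $\mathtt{u}$ stays inside a single $\omega$-stick so its analysis amounts to a finite existential case split on the witness index $k$, each sub-case handled exactly as in the modal step above (using condition (2) whenever the witness index meets or crosses the $r_2$-coordinate of $\mathbf{r}$). The $\mathtt U$-case is the genuine obstacle: the witness $\mathbf{w}$ can sit anywhere in $\omega \times \omega$, so I split into three zones, (i) $\mathbf{w} < \mathbf r$ (IH (a) directly), (ii) $\mathbf{w} \geqslant \mathbf s$, translated via $\mathbf w = \langle s_1 + i, s_2 + j\rangle \mapsto \langle r_1 + i, r_2 + j\rangle$ using IH (b), and (iii) $\mathbf r \leqslant \mathbf w < \mathbf s$, which is where condition (2) does the real work: it lets me replace the witness by $\mathbf s$ in $\xi$ and hence by $\mathbf r$ in $\eta$ (via IH (b) at $i = j = 0$), while the universal "$\phi$-throughout" clause is verified by stitching its restrictions to $[\mathbf t, \mathbf r)$ (handled by IH (a)) and to $[\mathbf r, \mathbf w)$ (handled via condition (2) at the boundary together with the intervening $\langle r_1, r_2 + l\rangle$ instants). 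The bookkeeping for case (iii)—in particular, ensuring that no "$\phi$-gap" is introduced or erased by the deletion—is the main point that will require care.
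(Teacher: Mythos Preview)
Your approach differs from the paper's in two ways. First, the paper dispatches (b) in one line as an immediate consequence of condition~(4) and the semantic clauses, and then proves (a) on its own. Second, and more substantially, for (a) the paper does not carry out your direct case analysis of $\mathtt u$ and $\mathtt U$ at all: observing the semantic equivalences
\[
\psi\,\mathtt u\,\theta \;\Leftrightarrow\; \theta \lor \bigvee_{n}\Bigl(\textstyle\bigwedge_{k=0}^n[1]^k(\psi\land\lnot\theta)\land [1]^{n+1}\theta\Bigr)
\]
and the analogous one for $\mathtt U$, it argues that it suffices to show the set of formulas satisfying (a) is closed under $\lnot,\land,[1],[\omega]$. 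This completely sidesteps your three-zone analysis of the $\mathtt U$-witness; the trade-off is that your route is more self-contained and makes the role of condition~(2) explicit at every step, whereas the paper's reduction is terser but leans on the infinitary characterizations.

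There is, however, a concrete gap in your treatment of the $[\omega]$-case for (a). You say the jump from $\mathbf t<\mathbf r$ lands ``either strictly below $\mathbf r$ \ldots\ or exactly at $\mathbf r$'', but $[\omega]$ can overshoot: if $t_1=r_1$ and $t_2<r_2$ (which requires $r_2>0$) then $\langle t_1+1,0\rangle>\mathbf r$. The paper treats this as a separate third subcase, using that $\xi\models_{\mathbf t}[\omega]\chi$ iff $\xi\models_{\mathbf r}[\omega]\chi$ (both unwind to $\chi$ at $\langle r_1+1,0\rangle$) and then applying condition~(2) to $[\omega]\chi\in\mathrm{Sub}(\phi)$ rather than to $\chi$. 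Your bridge argument as written (applying (2) to $\chi$ at $\mathbf r$) does not cover this. The same overshoot phenomenon affects your $\mathtt u$ and $\mathtt U$ cases when $t_1=r_1$: in your zone~(iii) you cannot ``replace the witness by $\mathbf s$'' and expect $\chi_2$ to hold there, since condition~(2) tells you nothing about $\chi_2$ at $\mathbf s$; what you must do instead is apply (2) to the entire formula $\chi_1\,\mathtt U\,\chi_2$ at $\mathbf r$, obtain a fresh witness $\geqslant\mathbf s$ in $\xi$, and only then transfer that witness via IH~(b) for $\chi_1,\chi_2$.
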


\begin{proof}
Note that (b) is an immediate consequence of Definition \ref{satisfiability relation} and the fourth condition in the statement of the lemma.
Therefore, it remains to prove (a). It turns out that a somewhat stronger claim is easier to prove. Namely, let $A$ be the set of all formulas with variables from $Var(\phi)$ that satisfy (a).
We claim that $Var(\phi)\subseteq A$ and that $A$ is closed under $\lnot$, $\land$, $[1]$, $[\omega]  $, $\mathtt u$ and $\mathtt U$.
Moreover,
since
$$
\psi\,\mathtt u\, \theta \Leftrightarrow \theta \lor \bigvee_{n\in\omega}\bigwedge_{k=0}^n[1]^k(\psi\land \lnot\theta)\land [1]^{n+1}\theta
$$
and
$$
\psi\,\mathtt U\,\theta \Leftrightarrow \psi\,\mathtt u\,\theta \lor \bigvee_{n\in\omega}\bigwedge_{k=0}^n[\omega]  ^k\mathtt g(\psi\land\lnot\theta)
\land [\omega]  ^{n+1}(\psi\,\mathtt u\,\theta),
$$
for the verification of the lemma it is sufficient to prove that $Var(\phi)\subseteq A$ and that $A$ is closed under $\lnot$, $\land$, $[1]$ and
$[\omega]  $.

By (3), all propositional letters
form $Var(\phi)$ satisfy (a). Moreover, if $\lnot \psi, \theta_1\land \theta_2\in {\rm Sub}(\phi)$ and $\psi$, $\theta_1$ and $\theta_2$
satisfy (a), then by Definition \ref{satisfiability relation} immediately follows that $\lnot\psi$ and $\theta_1\land\theta_2$ also satisfy (a).

Suppose that $[1]\psi\in {\rm Sub}(\phi)$ and that $\psi$ satisfies (a). We need to prove that $\xi\models_\mathbf t[1] \psi$ iff
$\eta\models_{\mathbf t}[1] \psi$ for all $\mathbf t<\mathbf r$. There are two relevant cases:

\begin{itemize}

\item $\langle t_1,t_2+1\rangle<\mathbf r$. Then,
\begin{eqnarray*}
\xi\models_\mathbf t[1] \psi &\Leftrightarrow& \xi\models_{\langle t_1,t_2+1\rangle}\psi\\
&\Leftrightarrow& \eta\models_{\langle t_1,t_2+1\rangle}\psi\ \ (\psi\in A\ {\rm and}\ \langle t_1,t_2+1\rangle<\mathbf r)\\
&\Leftrightarrow& \eta\models_\mathbf t[1] \psi.
\end{eqnarray*}

\item $\langle t_1,t_2+1\rangle=\mathbf r$ (note that this case cannot occur if $\mathbf r$ is a limit, i.e. if $r_2=0$).
Then,
\begin{eqnarray*}
\xi\models_\mathbf t[1] \psi &\Leftrightarrow& \xi\models_\mathbf r\psi\\
&\Leftrightarrow& \xi\models_\mathbf s\psi\ \ (\rm follows\ from (2))\\
&\Leftrightarrow& \eta\models_\mathbf r\psi\ \ (\rm follows\ from (4))\\
&\Leftrightarrow& \eta\models_\mathbf t[1]\psi.
\end{eqnarray*}

\end{itemize}

Suppose that $[\omega]  \,\psi$ is a subformula of $\phi$ and that $\psi$ satisfies (a). There are three relevant cases:

\begin{itemize}

\item $\langle t+1,0\rangle<\mathbf r$. Then,
\begin{eqnarray*}
\xi\models_{\mathbf t}[\omega]  \psi &\Leftrightarrow& \xi\models_{\langle t_1+1,0\rangle}\psi\\
&\Leftrightarrow& \eta\models_{\langle t_1+1,0\rangle}\psi\ \ (\psi\in A\ {\rm and}\ \langle t_1+1,0\rangle<\mathbf r)\\
&\Leftrightarrow& \eta\models_{\mathbf t}[\omega]  \psi;
\end{eqnarray*}

\item Let $\langle t_1+0,0\rangle=\mathbf r$. Then,
\begin{eqnarray*}
\xi\models_{\mathbf t}[\omega]  \psi &\Leftrightarrow& \xi\models_{\mathbf r}\psi\\
&\Leftrightarrow& \xi\models_{\mathbf s}\psi\ \ ({\rm follows\ from}\ (2))\\
&\Leftrightarrow& \eta\models_{\mathbf r }\psi\ \ ({\rm follows\  from}\ (4))\\
&\Leftrightarrow& \eta\models_{\langle t_1+1,0\rangle}\psi\ \ (\langle t_1+1,0\rangle=\mathbf r)\\
&\Leftrightarrow& \eta\models_{\mathbf t}[\omega]   \psi;
\end{eqnarray*}

\item Let $\langle t_1+1,0\rangle>\mathbf r$. Since $\mathbf t<\mathbf r$, it must be $t_1= r_1$, so
\begin{eqnarray*}
\xi \models_{\mathbf t}[\omega]  \psi &\Leftrightarrow& \xi\models_{\mathbf r}[\omega]  \psi\\
&\Leftrightarrow& \xi\models_{\mathbf s}[\omega]  \psi\ \ ({\rm follows\ from}\ (2))\\
&\Leftrightarrow& \xi\models_{\langle s_1+1,0\rangle}\psi\\
&\Leftrightarrow& \eta\models_{\langle r_1+1,0\rangle}\psi\ \ ({\rm follows\ from}\ (4))\\
&\Leftrightarrow& \eta\models_{\langle t_1+1,0\rangle}\psi\ \ (r_1=t_1)\\
&\Leftrightarrow& \eta\models_{\mathbf t}[\omega]  \psi.
\end{eqnarray*}

\end{itemize}

\end{proof}

\begin{lemma}[Outer loop]
\label{outer loop}

With notation as before, suppose that $\phi$, $\xi$, $\eta$, $k$ and $m$ ($k,m\in\omega$ and $m>0$) satisfy the following conditions:

\begin{enumerate}

\item  ${\rm Sub}(\phi,\langle k, 0\rangle,\xi)={\rm Sub}(\phi,\langle k+m, 0\rangle,\xi)$;

\item  $\xi(\mathbf t,p)=\eta(\mathbf t,p)$ for all $\mathbf t<\langle k+m,0\rangle$ and all $p\in Var$;

\item  $\eta(\mathbf s,p)=\eta(s_1+m,s_2,p)$ for all $\mathbf s\geqslant \langle k,0\rangle$ and all $p\in Var$;

\item If $\psi\,\mathtt U\,\theta\in{\rm Sub}(\phi)$ and $\xi\models_{\langle k,0\rangle}\psi\,\mathtt U\,\theta$,
then there exists $\langle k,0\rangle\leqslant \mathbf r
<\langle k+m,0\rangle$ such that $\xi\models_{\mathbf r}\theta$.

\end{enumerate}

Then, the following hold:

\begin{itemize}

\item[(a)] ${\rm Sub}(\phi,\mathbf t,\xi)={\rm Sub}(\phi,\mathbf t,\eta)$ for all $\mathbf t<\langle k+m,0\rangle$;

\item[(b)] ${\rm Sub}(\phi, \mathbf s, \eta)={\rm Sub}(\phi, \langle s_1+m,s_2\rangle, \eta)$ for all $\mathbf s\geqslant \langle k,0\rangle$.

\end{itemize}

\end{lemma}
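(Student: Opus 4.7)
The plan is to prove (a) and (b) simultaneously by induction on the complexity of formulas, following the template of Lemma \ref{deleting}. As there, the semantic equivalences
\[
\psi\,\mathtt u\,\theta \Leftrightarrow \theta \lor \bigvee_{n\in\omega}\bigwedge_{k=0}^n[1]^k(\psi\land\lnot\theta)\land[1]^{n+1}\theta,
\]
\[
\psi\,\mathtt U\,\theta \Leftrightarrow \psi\,\mathtt u\,\theta \lor \bigvee_{n\in\omega}\bigwedge_{k=0}^n[\omega]^k\mathtt g(\psi\land\lnot\theta)\land[\omega]^{n+1}(\psi\,\mathtt u\,\theta)
\]
reduce everything to the base case and closure under $\lnot$, $\land$, $[1]$, and $[\omega]$, since an infinite disjunction preserves both (a) and (b) whenever each disjunct does.

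Variables in $Var(\phi)$ satisfy (a) by condition (2) and (b) by condition (3), and the Boolean cases are trivial. For $[1]\psi$, the maps $\mathbf t\mapsto\langle t_1, t_2+1\rangle$ and $\mathbf s\mapsto\langle s_1, s_2+1\rangle$ preserve, respectively, $t_1<k+m$ and $s_1\geqslant k$, so both (a) and (b) reduce directly to the inductive hypothesis for $\psi$ at the shifted point. Part (b) of $[\omega]\psi$ is analogous, applying the inductive (b) for $\psi$ at $\langle s_1+1, 0\rangle$.

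The substantive step is (a) for $[\omega]\psi$. If $t_1+1<k+m$, then $\langle t_1+1, 0\rangle<\langle k+m, 0\rangle$ and the inductive (a) at that point suffices. When $t_1=k+m-1$ the successor $\langle k+m, 0\rangle$ sits exactly on the boundary past which condition (2) says nothing, and I would bridge it with the chain
\begin{align*}
\xi\models_\mathbf t[\omega]\psi &\Leftrightarrow \xi\models_{\langle k+m, 0\rangle}\psi \overset{(1)}{\Leftrightarrow} \xi\models_{\langle k, 0\rangle}\psi \\
&\overset{\text{IH (a)}}{\Leftrightarrow} \eta\models_{\langle k, 0\rangle}\psi \overset{\text{IH (b)}}{\Leftrightarrow} \eta\models_{\langle k+m, 0\rangle}\psi \Leftrightarrow \eta\models_\mathbf t[\omega]\psi,
\end{align*}
combining condition (1) on the $\xi$-side with the inductive (a) at $\langle k, 0\rangle<\langle k+m, 0\rangle$ and the inductive (b) for $\psi$ taken at $\mathbf s=\langle k, 0\rangle$. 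The same chain underlies the hidden use of condition (4): once the $\mathtt U$-equivalence is unrolled, (4) guarantees that any $\mathtt U$-subformula satisfied at $\langle k, 0\rangle$ by $\xi$ has its $\theta$-witness already inside the first loop $[\langle k, 0\rangle, \langle k+m, 0\rangle)$, and (2) together with the $m$-periodicity furnished by (b) then transports that witness to all subsequent loops of $\eta$.

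The main obstacle is this $[\omega]\psi$ boundary case: it is the only step that must simultaneously invoke (1), the inductive (a), and the inductive (b) for the simpler formula. Everything else is either strictly below $\langle k+m, 0\rangle$---where (2) and routine induction suffice---or at or above $\langle k, 0\rangle$ on the $\eta$-side, where condition (3) and the $m$-periodicity encoded in (b) take over.
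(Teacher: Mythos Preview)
Your reduction of $\mathtt u$ and $\mathtt U$ to infinitary Boolean combinations over $[1]$ and $[\omega]$ breaks down at the $\mathtt U$ step. The chain you write for the $[\omega]\psi$ boundary case uses condition~(1) to pass from $\xi\models_{\langle k+m,0\rangle}\psi$ to $\xi\models_{\langle k,0\rangle}\psi$; this is only licensed when $\psi\in{\rm Sub}(\phi)$. But unrolling $\psi\,\mathtt U\,\theta$ produces disjuncts containing $[\omega]^i\mathtt g(\psi\land\lnot\theta)$ and $[\omega]^{n+1}(\psi\,\mathtt u\,\theta)$, and to push (a) through their outermost $[\omega]$ at the boundary you would need condition~(1) for the inner formulas $[\omega]^{i-1}\mathtt g(\psi\land\lnot\theta)$, $[\omega]^{n}(\psi\,\mathtt u\,\theta)$, etc., none of which lie in ${\rm Sub}(\phi)$. (By contrast, the $\mathtt u$ unrolling uses only $[1]$, for which no boundary issue arises since $\langle t_1,t_2+1\rangle<\langle k+m,0\rangle$ whenever $\mathbf t<\langle k+m,0\rangle$; that half of your reduction is fine.)

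This is why the paper treats $\psi\,\mathtt U\,\theta$ directly in the structural induction over subformulas rather than via the infinitary equivalence. If $\xi\models_{\mathbf t}\psi\,\mathtt U\,\theta$ with $\mathbf t<\langle k+m,0\rangle$, conditions~(1) and~(4) together locate a $\theta$-witness for $\xi$ inside $[\langle k,0\rangle,\langle k+m,0\rangle)$---and this appeal is legitimate precisely because $\psi\,\mathtt U\,\theta$ and $\theta$ are genuine subformulas of $\phi$. The inductive (a) for $\psi$ and $\theta$ then copies the witness and the $\psi\land\lnot\theta$ prefix into $\eta$, while the inductive (b) furnishes the periodicity needed for the converse direction and for part~(b) of $\psi\,\mathtt U\,\theta$. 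So condition~(4) is not a hidden afterthought: it is the device that replaces the $[\omega]$-closure you are trying to invoke on non-subformulas.
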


\begin{proof}
\noindent  We will prove (a) and (b) simultaneously by induction on complexity of formulas. Firstly, (a) and (b) are obviously true for all propositional variables from $Var(\phi)$ and they are
preserved by negation and conjunction (this is an immediate consequence of Definition \ref{satisfiability relation}).

Suppose that $[1] \psi\in {\rm Sub}(\phi)$ and that $\psi$ satisfies both (a) and (b). Let $\mathbf t<\langle k+m,0\rangle$. Then,

\begin{eqnarray*}
\xi\models_{\mathbf t}[1] \psi&\Leftrightarrow& \xi\models_{\langle t_1,t_2+1\rangle}\psi\\
&\Leftrightarrow&\eta\models_{\langle t_1,t_2+1\rangle}\psi \ \ (\langle t_1,t_2+1\rangle<\langle k+m,0\rangle
\ {\rm and}\ \psi\ {\rm satisfies}\ (a))\\
&\Leftrightarrow& \eta\models_{\mathbf t}[1] \psi.
\end{eqnarray*}
Let $\langle k,0\rangle \leqslant \mathtt t<\langle k+m,0\rangle$. Then,
\begin{eqnarray*}
\eta\models_{\mathbf t}[1] \psi&\Leftrightarrow& \eta\models_{\langle t_1,t_2+1\rangle}\psi\\
&\Leftrightarrow&\eta\models_{\langle t_1+m,t_2+1\rangle}\psi \ \ (\langle t_1,t_2+1\rangle<\langle k+m,0\rangle
\ {\rm and}\ \psi\ {\rm satisfies}\ (b))\\
&\Leftrightarrow& \eta\models_{\langle t_1+m,t_2\rangle}[1] \psi.
\end{eqnarray*}

Suppose that $[\omega]   \psi$ is a subformula of $\phi$ and that $\psi$ satisfies both (a) and (b). Let $\mathbf t<\langle k+m,0\rangle$.
If $\langle t_1+1,0\rangle<\langle k+m,0\rangle$, then
\begin{eqnarray*}
\xi\models_{\mathbf t}[\omega]  \psi &\Leftrightarrow&\xi\models_{\langle t_1+1,0\rangle}\psi\\
&\Leftrightarrow& \eta\models_{\langle t_1+1,0\rangle}\psi\\
&\Leftrightarrow& \eta\models_{\mathbf t}[\omega]  \psi.
\end{eqnarray*}
If $\langle t_1+1,0\rangle=\langle k+m,0\rangle$, then
\begin{eqnarray*}
\xi\models_{\mathbf t}[\omega]   \psi &\Leftrightarrow& \xi\models_{\langle t_1+1,0\rangle}\psi\\
&\Leftrightarrow& \xi\models_{\langle k+m,0\rangle}\psi\\
&\Leftrightarrow& \xi\models_{\langle k,0\rangle}\psi\ \ ({\rm by\ (1)})\\
&\Leftrightarrow& \eta\models_{\langle k,0\rangle}\psi\ \ ({\rm by\ (2)})\\
&\Leftrightarrow& \eta\models_{\langle k+m,0\rangle}\psi\ \ ({\rm by\ (b)})\\
&\Leftrightarrow& \eta\models_{\mathbf t}[\omega]  \psi.
\end{eqnarray*}
Let $\langle k,0\rangle\leqslant \mathbf t<\langle k+m,0\rangle$.
If $\langle t_1+1,0\rangle<\langle k+m,0\rangle$, then
\begin{eqnarray*}
\eta\models_{\mathbf t}[\omega]  \psi &\Leftrightarrow&\eta\models_{\langle t_1+1,0\rangle}\psi\\
&\Leftrightarrow& \eta\models_{\langle t_1+1+m,0\rangle}\psi\\
&\Leftrightarrow& \eta\models_{\langle t_1+m+1,0\rangle}\psi\\
&\Leftrightarrow& \eta\models_{\langle t_1+m,t_2\rangle}[\omega]  \psi.
\end{eqnarray*}
If $\langle t_1+1,0\rangle=\langle k+m,0\rangle$, then
\begin{eqnarray*}
\eta\models_{\mathbf t}[\omega]   \psi &\Leftrightarrow& \eta\models_{\langle t_1+1,0\rangle}\psi\\
&\Leftrightarrow& \eta\models_{\langle k+m,0\rangle}\psi\\
&\Leftrightarrow& \eta\models_{\langle k+2m,0\rangle}\psi\ \ ({\rm by\ (3)})\\
&\Leftrightarrow& \eta\models_{\langle t_1+m+1,0\rangle}\psi\\
&\Leftrightarrow& \eta\models_{\langle t_1+m,t_2\rangle}[\omega]  \psi.
\end{eqnarray*}

Suppose that $\psi\,\mathtt u\,\theta$ is a subformula of $\phi$ and that $\psi$ and $\theta$ satisfy both (a) and (b).
Let $\mathbf t<\langle k+m,0\rangle$. Then, $\langle t_1,t_2+i\rangle<\langle k+m,0\rangle$ for all $i<\omega$,
so $\xi\models_{\langle t_1,t_2+i\rangle}\theta$ iff $\eta\models_{\langle t_1,t_2+i\rangle}\theta$ for all $i<\omega$ and
$\xi\models_{\langle t_1,t_2+i\rangle}\psi\land\lnot\theta$ iff $\eta\models_{\langle t_1,t_2+i\rangle}\psi\land\lnot\theta$ for all $i<\omega$.
Consequently, $\xi\models_{\mathbf t}\psi\,\mathtt u\,\theta$ iff
$\eta\models_{\mathbf t}\psi\,\mathtt u\,\theta$. Similarly, if $\langle k,0\rangle\leqslant \mathbf t<\langle k+m,0\rangle$,
then $\eta\models_{\mathbf t}\psi\,\mathtt u\,\theta$ iff $\eta\models_{\langle t_1+m,t_2\rangle}\psi\,\mathtt u\,\theta$

Suppose that $\psi\,\mathtt U\,\theta$ is a subformula of $\phi$ and that $\psi$ and $\theta$ satisfy both (a) and (b). Then, for all
$\mathbf t<\langle k+m,0\rangle$, $\xi\models_{\mathbf t}\theta$ iff $\eta\models_{\mathbf t}\theta$
and $\xi\models_{\mathbf t}\psi\land\lnot\theta$ iff $\xi\models_{\mathbf t}\psi\land\lnot\theta$.

Let $\xi\models_{\mathbf t}\psi\,\mathtt U\,\theta$ for some $\mathbf t<\langle k+m,0\rangle$. By (4), there exists $\mathbf t\leqslant \mathbf r
<\langle k+m,0\rangle$ such that $\xi\models_{\mathbf r}\theta$. Since $[\mathbf t, \langle k+m,0\rangle)_{\rm lex}$ is a well ordering, we can
chose minimal $\mathbf r\in [\mathbf t, \langle k+m,0\rangle)_{\rm lex}$ such that $\xi\models_{\mathbf r}\theta$. Now we have that
$\xi\models_{\mathbf s}\psi\land \lnot \theta$ for all $\mathbf t\leqslant \mathbf s<\mathbf r$, so by induction hypothesis and
Definition \ref{satisfiability relation}, $\eta\models_{\mathbf t}\psi\,\mathtt U\,\theta$.

Let $\eta\models_{\mathbf t}\psi\,\mathtt U\,\theta$ for some $\mathbf t<\langle k+m,0\rangle$. Then, there exists $\mathbf r\geqslant \mathbf t$
such that $\eta\models_{\mathbf r}\theta$ and $\eta\models_{\mathbf s}\psi\land \lnot\theta$ for all $\mathbf t\leqslant \mathbf s <\mathbf r$.
Since $[\mathbf t,\infty)_{\rm lex}$ is a well ordering, we can chose minimal $\mathbf r$. Moreover, (b) implies that minimal $\mathbf r$ is strictly lesser than
$\langle k+m,0\rangle$, so we have that $\xi\models_{\mathbf t}\psi\,\mathtt U\,\theta$.

It remains to prove (b) for  $\psi\,\mathtt U\,\theta$, provided that $\psi$ and $\theta$ satisfy
both (a) and (b). By (b) and Definition \ref{satisfiability relation},
$\eta\models_{\mathbf s}\theta$ iff $\eta\models_{\langle s_1+m,s_2\rangle}\theta$
and $\eta\models_{\mathbf s}\psi\land\lnot \theta$ iff $\eta\models_{\langle s_1+m,s_2\rangle}\psi\land\lnot\theta$ for all
$\mathbf s\in [\langle k,0\rangle,\langle k+m,0\rangle)_{\rm lex}$. As we have already proved, $\eta\models_{\mathbf s}\psi\,\mathtt U\,\theta$ iff
$\xi\models_{\mathbf s}\psi\,\mathtt U\,\theta$ for all $\mathbf s\in [\langle k,0\rangle,\langle k+m,0\rangle)_{\rm lex}$. As a consequence, $\eta$ satisfies (4).

Let $\langle k,0\rangle\leqslant\mathbf t<\langle k+m,0\rangle$ and let $\eta\models_{\mathbf t}\psi\,\mathtt U\,\theta$. By (4), there exists
$\mathbf t\leqslant\mathbf r <\langle k+m,0\rangle$ such that $\eta\models_{\mathbf r}\theta$ and $\eta\models_{\mathbf s}\psi\land \lnot\theta$ for all
$\mathbf s\in[\mathbf t,\mathbf r)_{\rm lex}$. By induction hypothesis,
$\eta\models_{\langle r_1+m,r_2\rangle}\theta$ and $\eta\models_{\langle s_1+m,s_2\rangle}\psi\land \lnot\theta$ for all
$\mathbf s\in[\mathbf t,\mathbf r)_{\rm lex}$, so $\eta\models_{\langle t_1+m,t_2\rangle}\psi\,\mathtt U\,\theta$. The converse implication can be proved similarly.
\end{proof}

\begin{lemma}[Inner loop]
\label{inner loop}

With notation as before, suppose that $\phi$, $\xi$, $\eta$, $k$, $l$ and $m$ ($k,l,m\in\omega$ and $m>0$) satisfy the following conditions:

\begin{enumerate}

\item  ${\rm Sub}(\phi,\langle k, l\rangle,\xi)={\rm Sub}(\phi,\langle k, l+m\rangle,\xi)$;

\item  $\xi(\mathbf t,p)=\eta(\mathbf t,p)$ for all $\mathbf t<\langle k,l+m\rangle$, $\mathbf t\geqslant \langle k+1,0\rangle$ and all $p\in Var$;

\item  $\eta(k,i,p)=\eta(k,i+m,p)$ for all $i\geqslant l$ and all $p\in Var$;

\item If $\psi\,\mathtt u\,\theta\in{\rm Sub}(\phi)$ and $\xi\models_{\langle k,l\rangle}\psi\,\mathtt u\,\theta$,
then there exists $\langle k,l\rangle\leqslant \mathbf r
<\langle k,l+m\rangle$ such that $\xi\models_{\mathbf r}\theta$.

\end{enumerate}

Then, the following hold:

\begin{itemize}

\item[(a)] ${\rm Sub}(\phi,\mathbf t,\xi)={\rm Sub}(\phi,\mathbf t,\eta)$ for all $\mathbf t<\langle k,l+m\rangle$ and all $\mathbf t\geqslant \langle k+1,0\rangle$;

\item[(b)] ${\rm Sub}(\phi, \mathbf s, \eta)={\rm Sub}(\phi, \langle s_1,s_2+m\rangle, \eta)$ for all
$\mathbf s\in [\langle k,l\rangle,\langle k+1,0\rangle)_{\rm lex}$.

\end{itemize}
\end{lemma}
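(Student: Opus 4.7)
Following the pattern of Lemma \ref{outer loop}, the plan is to prove (a) and (b) simultaneously by induction on the complexity of formulas with variables from $Var(\phi)$. The base case (propositional variables) is immediate from hypotheses (2) and (3) of the lemma, and the cases $\lnot$ and $\land$ are straightforward from Definition \ref{satisfiability relation}.

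For $[1]\psi$, the interesting subcase of (a) is $\mathbf t = \langle k, l+m-1\rangle$, when the successor $\langle k, l+m\rangle$ falls in the ``gap'' between the two agreement regions. Here condition (1) lets us swap $\xi$-satisfaction at $\langle k, l+m\rangle$ for $\xi$-satisfaction at $\langle k, l\rangle$, from which IH (a) on $\psi$ moves to $\eta$ at $\langle k, l\rangle$ and IH (b) on $\psi$ returns to $\eta$ at $\langle k, l+m\rangle$. Part (b) for $[1]\psi$ is immediate from IH (b) on $\psi$. For $[\omega]\psi$ both parts are easy, since from any $\mathbf t$ relevant for (a) the $[\omega]$-successor $\langle t_1+1, 0\rangle$ already lies in the agreement region, and in (b) both $\mathbf s$ and $\langle s_1, s_2+m\rangle$ share the same $[\omega]$-successor $\langle k+1, 0\rangle$.

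The $\mathtt u$ case is the crux, being the analogue, in the inner-loop setting, of what condition (4) was designed to tame. Part (b) is cleanest: IH (b) on $\psi$ and $\theta$ makes both $m$-periodic in $\eta$ on the stick $k$ from $l$ onwards, and since the truth of the local until on such a stick is determined by forward $m$-periodic data, $\psi\,\mathtt u\,\theta$ is $m$-periodic too. Part (a) is the point of substance; the only delicate subcase is $\mathbf t = \langle k, t_2\rangle$ with $t_2 < l+m$, since only then can the local until from $\mathbf t$ escape the agreement region. Given $\xi \models_\mathbf t \psi\,\mathtt u\,\theta$ with minimal witness $\mathbf r$: if $\mathbf r < \langle k, l+m\rangle$ then IH (a) on $\psi$ and $\theta$ transfers; if $\mathbf r \geq \langle k, l+m\rangle$, then $\psi$ holds and $\theta$ fails on the segment $[\mathbf t, \langle k, l+m\rangle)$ in $\xi$, so condition (1) lifts the witness at $\mathbf r$ back to $\langle k, l\rangle$, forcing $\xi \models_{\langle k, l\rangle}\psi\,\mathtt u\,\theta$, and condition (4) then supplies a $\theta$-position $\mathbf r' \in [\langle k, l\rangle, \mathbf t)$ in $\xi$. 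Shifting $\mathbf r'$ forward by $m$ via IH (b) on $\theta$ gives a $\theta$-position in $\eta$ at or past $\mathbf t$; the required $\psi$-stretch in $\eta$ is assembled from IH (a) on $\psi$ inside the agreement segment $[\mathbf t, \langle k, l+m\rangle)$ and from IH (b) on $\psi$ applied to the initial $\psi$-prefix $[\langle k, l\rangle, \mathbf r')$ supplied by the until at $\langle k, l\rangle$. The converse direction runs by a symmetric fold: an escaped $\eta$-witness is pulled back into the loop by IH (b) on $\theta$, and the corresponding $\xi$-witness is then produced by IH (a).

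The $\mathtt U$ case follows the outer-loop recipe in this new setting: positions $\geq \langle k+1, 0\rangle$ already lie in the agreement region, so tail behavior is preserved, and within $[\mathbf t, \langle k+1, 0\rangle)$ the inductive (a) and (b) for $\theta$, $\psi$, and $\psi\,\mathtt u\,\theta$, together with condition (1) at the loop boundary, match $\xi$ and $\eta$. The main obstacle is precisely the local-until case described above: unlike in the outer loop, where any witness for $\mathtt u$ stays on a single $\omega$-stick far below the contraction boundary, the inner-loop contraction forces the witness to potentially cross $\langle k, l+m\rangle$, so conditions (1) and (4) must be combined with the inductive periodicity (b) to fold witnesses in and out of the loop.
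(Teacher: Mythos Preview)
Your proposal is correct and follows the same strategy as the paper: a simultaneous induction on formula complexity for (a) and (b), with the same case breakdown and the same use of condition~(1) at the loop boundary and condition~(4) for the local-until witnesses. Your handling of the $\mathtt u$-case is in fact more explicit than the paper's terse version---the paper simply asserts that for $\xi\models_{\langle k,t_2\rangle}\psi\,\mathtt u\,\theta$ with $t_2\geqslant l$ the witness already lies strictly below $\langle k,l+m\rangle$, whereas your combination of (1), (4) and the inductive periodicity (b) to fold an escaped witness back into the loop is precisely what is needed to justify that step.
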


\begin{proof}

We will prove (a) and (b) simultaneously by induction on complexity of formulas. As before, the statement is obviously true for
propositional variables and its validity is preserved under negation and conjunction. Furthermore, (2) implies that
${\rm Sub}(\phi,\mathbf t,\xi)={\rm Sub}(\phi,\mathbf t,\eta)$ for all $\mathbf t\geqslant \langle k+1,0\rangle$.
Similarly as in the proof of Lemma \ref{outer loop} we can show that (4) holds for $\eta$ as well.

Suppose that $[1] \psi$ is a subformula of $\phi$ and that $\psi$ satisfies both (a) and (b). Let $\mathbf t<\langle k,l+m\rangle$.
If $t_2+1<l+m$, then by induction hypothesis $\xi\models_{\mathbf t}[1] \psi$ iff $\eta\models_{\mathbf t}[1] \psi$. Let
$t_2+1=l+m$. Then
\begin{eqnarray*}
\xi\models_{\mathbf t}[1] \psi &\Leftrightarrow& \xi\models_{\langle k,l+m\rangle}\psi\\
&\Leftrightarrow& \xi\models_{\langle k,l\rangle}\psi\\
&\Leftrightarrow& \eta\models_{\langle k,l+m\rangle}\psi\\
&\Leftrightarrow& \eta\models_{\mathbf t}[1] \psi.
\end{eqnarray*}
Let $\langle k,l\rangle\leqslant \mathbf t<\langle k+1,0\rangle$. Then,
\begin{eqnarray*}
\eta\models_{\mathbf t}[1] \psi &\Leftrightarrow& \eta\models_{\langle t_1,t_2+1\rangle}\psi\\
&\Leftrightarrow& \eta\models_{\langle t_1,t_2+1+m\rangle}\psi\\
&\Leftrightarrow& \eta\models_{\langle t_1,t_2+m+1\rangle}\psi\\
&\Leftrightarrow& \eta\models_{\langle t_1,t_2+m\rangle}[1]\psi.
\end{eqnarray*}

Suppose that $[\omega]  \psi$ is a subformula of $\phi$ and that $\psi$ satisfies (a) and (b). Let $\mathbf t<\langle k,l+m\rangle$.
If $t_1+1<k$, then (a) can be straightforwardly verified for $[\omega]  \psi$ by induction hypothesis. If $t_1+1=k$, then
\begin{eqnarray*}
\xi\models_{\mathbf t}[\omega]  \psi &\Leftrightarrow& \xi\models_{\langle k,0\rangle}\psi\\
&\Leftrightarrow& \eta\models_{\langle k,0\rangle}\psi\\
&\Leftrightarrow& \eta\models_{\mathbf t}[\omega]  \psi.
\end{eqnarray*}
Let $t_1=k$. Then,
\begin{eqnarray*}
\xi\models_{\mathbf t}[\omega]  \psi &\Leftrightarrow& \xi\models_{\langle k+1,0\rangle}\psi\\
&\Leftrightarrow& \eta\models_{\langle k+1,0\rangle}\psi\\
&\Leftrightarrow& \eta\models_{\mathbf t}[\omega]  \psi.
\end{eqnarray*}

Let $\langle k,l\rangle\leqslant \mathbf t<\langle k+1,0\rangle$. Then, by Definition \ref{satisfiability relation}, $\eta\models_{\mathbf t}[\omega]  \psi$ iff
$\eta\models_{\langle k, l+i\rangle}[\omega]  \psi$ for all $i<\omega$. Hence $[\omega]  \psi$ satisfies (b).

Suppose that $\psi\,\mathtt u\,\theta$ is a subformula of $\phi$ and that both $\psi$ and $\theta$ satisfy (a) and (b).
Let $\mathbf t<\langle k,l+m\rangle$ and $\xi\models_{\mathbf t}\psi\,\mathtt u\,\theta$. The only nontrivial case is when $t_1=k$ and $t_2\geqslant l$.
Then, there exists $n<\omega$ such that $t_2+n<l+m$, $\xi_{\langle k,t_2+n\rangle}\models \theta$ and $\xi_{\langle k,l+i\rangle}\psi\land\lnot\theta$
for all $i\in\{0,\dots,n-1\}$. By induction hypothesis, we can replace $\xi$ with $\eta$, so $\eta\models_{\mathbf t}\psi\,\mathtt u\,\theta$. The converse
implication can be proved similarly.

Let $\langle k,l\rangle \leqslant \mathbf t<\langle k+1,0\rangle$. By induction hypothesis,
for all $i<\omega$ we have that $\eta\models_{\langle k,i\rangle}\theta$ iff $\eta\models_{\langle k,i+m\rangle}\theta$
and $\eta\models_{\langle k,i\rangle}\psi\land \lnot\theta$ iff $\eta\models_{\langle k,i+m\rangle}\psi\land\lnot\theta$.
Hence, by Definition \ref{satisfiability relation}, $\psi\,\mathtt u\,\theta$ satisfies (b).

Suppose that $\psi\,\mathtt U\,\theta$ is a subformula of $\phi$ and that both $\psi$ and $\theta$ satisfy (a) and (b). Let $\mathbf t<\langle, k,l+m\rangle$ and
$\xi\models_{\mathbf t}\psi\,\mathtt U\,\theta$. Then, there exists $\mathbf r\geqslant \mathbf t$ such that $\xi\models_{\mathbf r}\theta$ and
$\xi\models_{\mathbf s}\psi\land\lnot\theta$ for all $\mathbf t\leqslant \mathbf s<\mathbf r$. If $\mathbf r\geqslant \langle k, l+m\rangle$, then
by (3) we can replace $\xi$ with $\eta$, hence $\eta\models_{\mathbf t}\psi\,\mathtt U\,\theta$. If $\langle k,l\rangle\leqslant \mathbf r<\langle k,l+m\rangle$,
then
\begin{eqnarray*}
\xi\models_{\mathbf t}\psi\,\mathtt U\,\theta &\Leftrightarrow& \xi\models_{\langle k,l\rangle}\psi\,\mathtt u\,\theta\\
&\Leftrightarrow& \eta\models_{\langle k,l\rangle}\psi\,\mathtt u\,\theta\\
&\Leftrightarrow& \eta\models_{\mathbf t}\psi\,\mathtt U\,\theta.
\end{eqnarray*}
Finally, if $\mathbf r<\langle k,l\rangle$, then $\eta\models_{\mathbf t}\psi\,\mathtt U\,\theta$ immediately follows from induction hypothesis. The converse implication, as well as
the fact that $\psi\,\mathtt U\,\theta$ satisfies (b), can be proved similarly.
\end{proof}

\begin{theorem}[Periodicity]
\label{periodicity}
Let $\xi\models_{\mathbf t}\phi$. Then, there exist a model $\eta$ and positive integers $k$, $m$, $l_i$ and $n_i$, $i=1,\dots,k+m$ with the following properties:

\begin{enumerate}

\item $\eta\models_{\langle 0,0\rangle}\phi$;

\item $\eta(i,j,p)=\eta(i+m,j,p)$ for all $i> k$ and all $j\in\omega$;

\item $\eta(i,j,p)=\eta(i,j+n_i,p)$ for all $i\leqslant k+m$ and all $j>l_i$;

\item $\max(m,n_1,\cdots,n_m)\leqslant{\rm length}(\phi)\cdot 2^{{\rm length}(\phi)}$;

\item $\max(k,l_1,\dots,l_{k+m})\leqslant 2^{{\rm length}(\phi)}$.

\end{enumerate}
\end{theorem}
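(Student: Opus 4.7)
Let $n = {\rm length}(\phi)$, so $|{\rm Sub}(\phi)| \leqslant n$ and the number of distinct possible values of ${\rm Sub}(\phi,\mathbf t,\xi)$ is bounded by $2^n$. The plan is to combine a shift-to-origin with two successive pigeonhole constructions, applying Lemma \ref{outer loop} to contract the outer dimension and then applying Lemma \ref{inner loop} to contract each surviving $\omega$-stick.

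First I would reduce to the case $\mathbf t = \langle 0,0\rangle$. Define $\eta_0$ by $\eta_0(0,j,p) = \xi(t_1, t_2+j, p)$ and $\eta_0(i,j,p) = \xi(t_1+i, j, p)$ for $i\geqslant 1$. An induction on the complexity of subformulas of $\phi$, using Definition \ref{satisfiability relation}, shows $\eta_0 \models_{\langle 0,0\rangle}\phi$; each temporal operator only looks forward and the suffix of $\xi$ from $\mathbf t$ has order type $\omega^2$, so the $[\omega]$, $[1]$, $\mathtt u$ and $\mathtt U$ cases go through.

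Second, I would perform the outer-loop construction on the sequence $S_i := {\rm Sub}(\phi,\langle i,0\rangle,\eta_0)$, $i\in\omega$. By pigeonhole there exist $a < b \leqslant 2^n$ with $S_a = S_b$. If every $\mathtt U$-eventuality in $S_a$ has a witness in $[\langle a,0\rangle,\langle b,0\rangle)$, set $(k,m) := (a, b-a)$; otherwise pick an unfulfilled eventuality $\psi\,\mathtt U\,\theta\in S_a$, locate its first witness (which exists by the semantics of $\mathtt U$), and extend the window past that witness, applying pigeonhole again to obtain a new endpoint $b'$ with $S_a = S_{b'}$. Since there are at most $n$ such eventualities and each extension enlarges the window by at most $2^n$ positions, the construction terminates with $k\leqslant 2^n$ and $1 \leqslant m\leqslant n\cdot 2^n$. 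I would then build $\eta_1$ by copying the block of sticks $k,\ldots,k+m-1$ periodically beyond $\langle k+m,0\rangle$; conditions (1)-(4) of Lemma \ref{outer loop} hold, and the lemma produces $\eta_1\models_{\langle 0,0\rangle}\phi$ together with clause (2) of the theorem.

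Third, for each stick index $i\in\{0,1,\dots,k+m\}$ I would apply the analogous argument within stick $i$ to the sequence $T^{(i)}_j := {\rm Sub}(\phi,\langle i,j\rangle,\eta_1)$, with $\mathtt u$-eventualities (at most $n$ of them) now playing the role of $\mathtt U$-eventualities. The same pigeonhole-plus-extension argument gives $l_i\leqslant 2^n$ and $n_i\leqslant n\cdot 2^n$ satisfying conditions (1) and (4) of Lemma \ref{inner loop} relative to stick $i$. Applying Lemma \ref{inner loop} in turn for each $i$ modifies only stick $i$ from position $l_i$ onwards (by clause (2) of that lemma), so the foldings do not interfere, and the final model $\eta$ satisfies all of (1)-(5).

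The main obstacle is the combinatorial bookkeeping in the pigeonhole-plus-eventuality-tracking step: one must simultaneously ensure that the loop start and end have identical subformula sets and that every eventuality in that set is fulfilled within the loop, and one must show that the iterative enlargement terminates in at most $n$ rounds each costing $\leqslant 2^n$, which is precisely what gives the bound $n\cdot 2^n$ on the period. A secondary subtlety, when instantiating Lemma \ref{inner loop} on stick $i$, is checking that $\mathtt u$-witnesses and the relevant restrictions of $\mathtt U$-witnesses genuinely remain within the stick; this follows from the local semantics of $\mathtt u$ and from the fact that $\eta_1$ was already fixed to have the required outer structure before the inner contractions begin.
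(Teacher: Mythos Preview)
Your overall architecture matches the paper's: shift to the origin, then apply Lemma~\ref{outer loop} to obtain the outer period, then apply Lemma~\ref{inner loop} stick by stick. The non-interference remark for the inner foldings is correct and is exactly what the paper relies on (implicitly).

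There is, however, a genuine gap in your pigeonhole bookkeeping for the outer step. From ``there exist $a<b\leqslant 2^n$ with $S_a=S_b$'' you get $k=a<2^n$, but your extension step then needs, after each newly found $\mathtt U$-witness, a \emph{further} index $b'$ with $S_{b'}=S_a$. Nothing in your argument guarantees that the particular type $S_a$ recurs beyond $b$: the first pigeonhole repeat among $0,\dots,2^n$ may well be a type that occurs only finitely often, in which case ``applying pigeonhole again to obtain a new endpoint $b'$ with $S_a=S_{b'}$'' simply fails, and the claimed bound $m\leqslant n\cdot 2^n$ collapses. The same defect reappears verbatim in your inner-loop construction for each stick.

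The paper closes this gap with two ingredients you omit. First, it explicitly selects $\langle i,0\rangle$ whose $\sim$-class is \emph{infinite}; this is what makes the iterated extension past $\mathtt U$-witnesses possible and yields the factor ${\rm length}(\phi)$ in the bound on $m$. Second, since the first occurrence of an infinitely recurring type need not lie below $2^n$, the paper invokes Lemma~\ref{deleting} to delete redundant $\sim$-equivalent instants and thereby force the initial segment (and the distances between successive recurrences) down to size at most $2^{{\rm length}(\phi)}$. Your proposal never calls Lemma~\ref{deleting}; without it, either the recurrence or the bound on $k$ is unjustified. The fix is straightforward---choose an infinite class and compress with Lemma~\ref{deleting} before reading off $k$ and $m$---but it is not optional.
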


\begin{proof}
Starting with a model $\xi$, we will build desired model $\eta$ gradually. Let
$$
\eta_0(i,j,p)=\xi(i+t_1,j+t_2,p),\ \ i,j\in\omega, \ p\in Var.
$$
Clearly, $\eta_0\models_{\langle i,j\rangle}\psi$ iff $\xi\models_{\langle i+t_1,j+t+2\rangle}\psi$ for all $\psi\in For$,
hence $\eta_0$ satisfies (1).

Let $\mathbf r\sim \mathbf s$ iff ${\rm Sub}(\phi, \mathbf r,\eta_0)={\rm Sub}(\phi, \mathbf s,\eta_0)$. Clearly, $\sim$ is an equivalence relation
on $\omega \times \omega$ with finitely many equivalence classes. The number of classes is bounded by $2^{{\rm length}(\phi)}$, since there is at most
${\rm length}(\phi)$ subformulas of $\phi$. Consequently, a simple application of the pigeonhole principle shows that there exist positive integers $i$ and $j$,
$i<j$ so that $\langle i,0\rangle_{/\sim}$ is infinite, $\langle i,0\rangle\sim \langle j,0\rangle$
and
$\eta_0$, $\langle i,0\rangle$ and $\langle j,0\rangle$ satisfy condition (4) in Lemma \ref{outer loop}. Now we apply Lemma \ref{deleting} to ``delete"
all time instants $\langle r,0\langle\sim \langle i,0\rangle$, $r\neq i,j$ that do not violate the condition (4) in Lemma \ref{outer loop}. Since there are at most
${\rm length}(\phi)$ subformulas of $\phi$, the starting index $k$ of the outer loop (loop on first indices) is bounded by the number of classes of $\sim$, i.e.
$k\leqslant 2^{{\rm length}(\phi)}$ and the outer period $m$ is bounded by the product of the number of classes of $\sim$ and the number
of subformulas of $\phi$, i.e. $m\leqslant {\rm length}(\phi)\cdot 2^{{\rm length}(\phi)}$.

Let $\eta_1$ be a model such that $\eta_0$, $\eta_1$, $\langle k+1,0\rangle$, $\langle k+m+1,0\rangle$ satisfy conditions of Lemma \ref{outer loop}. It is easy to see that $\eta_1$ satisfies conditions (1) and (2) of the theorem. Similarly, using lemmas \ref{deleting} and \ref{inner loop}, we can transform
$\eta_1$ and obtain positive integers $l_1,\dots,l_{k+m}$, $n_1,\dots,n_m$ and a model $\eta$ that satisfy (1)--(4).
\end{proof}

\begin{theorem}
\label{complexity}

The satisfiability of $L([1],[\omega]  ,\mathtt u,\mathtt U)$-formulas is PSPACE complete.
\end{theorem}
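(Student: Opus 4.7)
The plan is to argue PSPACE-hardness and PSPACE-membership separately, then conclude by Savitch's theorem.

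For PSPACE-hardness, I would appeal to the classical result of Sistla and Clarke \cite{SC85}: satisfiability for propositional linear temporal logic over $\omega$ with operators $[1]$ and $\mathtt u$ is already PSPACE-hard. That logic is syntactically a fragment of $L([1],[\omega],\mathtt u,\mathtt U)$, and its standard semantics over $\omega$ embeds into the semantics here along the first stick $\{0\}\times \omega$: given an $L([1],\mathtt u)$-formula $\chi$, the formula $\chi$ is satisfiable in the Sistla--Clarke sense iff $\chi$ is satisfiable at $\langle 0,0\rangle$ in our logic, since neither $[\omega]$ nor $\mathtt U$ occurs in $\chi$. Hence the reduction is trivial and PSPACE-hardness transfers.

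For PSPACE-membership, the backbone is Theorem \ref{periodicity}: any satisfiable $\phi$ admits a model $\eta$ that is ultimately periodic in the outer coordinate with threshold $k\leqslant 2^{{\rm length}(\phi)}$ and period $m\leqslant {\rm length}(\phi)\cdot 2^{{\rm length}(\phi)}$, and each of the first $k+m$ sticks is ultimately periodic in the inner coordinate with thresholds $l_i$ and periods $n_i$ bounded by the same expressions. Following the Sistla--Clarke template, I would define an \emph{atom} to be a subset $A\subseteq {\rm Sub}(\phi)$ that is propositionally consistent with respect to $\lnot$ and $\land$ and respects the local semantics of $[1]$, $[\omega]$, $\mathtt u$ and $\mathtt U$ in the obvious one-step way (e.g.\ $\psi\,\mathtt u\,\theta\in A$ iff $\theta\in A$ or both $\psi\in A$ and $[1](\psi\,\mathtt u\,\theta)\in A$). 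Each atom is describable in $O({\rm length}(\phi))$ bits. A nondeterministic algorithm then guesses the sequence of atoms stick-by-stick and position-by-position, keeping in memory only: the current atom; the previous atom (to verify the one-step atom-to-atom consistency and, at a stick boundary, the $[\omega]$-consistency); the first atom of the current stick (to close an inner loop); the first atom of stick $k$ (to close the outer loop); binary counters for the current indices (bounded by $2^{{\rm length}(\phi)}$, hence representable in polynomial space); and a bit-vector of ``pending obligations'' recording which $\psi\,\mathtt u\,\theta$ and $\psi\,\mathtt U\,\theta$ formulas promised at the start of the current inner/outer segment have not yet been fulfilled.

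Using Theorem \ref{periodicity}, the algorithm proceeds as follows. It guesses $k\leqslant 2^{{\rm length}(\phi)}$ and $m\leqslant {\rm length}(\phi)\cdot 2^{{\rm length}(\phi)}$, then for $i=0,1,\dots,k+m-1$ guesses the initial atom of stick $i$, guesses its inner threshold $l_i$ and period $n_i$, and then guesses atoms along the stick, incrementing the inner counter; when the counter reaches $l_i+n_i$ it checks that the atom at position $l_i$ equals the atom at position $l_i+n_i$ and that every $\mathtt u$-obligation active at position $l_i$ has been discharged strictly before $l_i+n_i$, then it compresses the tail of the stick into the guessed period and records the atom at the stick boundary to feed $[\omega]$-consistency into the next stick. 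When the outer index passes $k+m$, the algorithm verifies that the stick at position $k$ coincides (as a whole periodic stick) with the stick at position $k+m$ and that every $\mathtt U$-obligation active at stick $k$ has been fulfilled somewhere in sticks $k,\dots,k+m-1$, matching clause (4) in the hypothesis of Lemma \ref{outer loop}. Because each counter and each bit-vector uses polynomial space and only a bounded number of atoms need to be kept, the whole procedure runs in NPSPACE. Savitch's theorem then gives PSPACE.

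The main technical obstacle is the bookkeeping of \emph{pending obligations} at two nested levels: an $\mathtt U$-promise made at the beginning of the outer period must be fulfilled within that period, while simultaneously every $\mathtt u$-promise made at the start of an inner period must be fulfilled within that inner period; one must verify that acceptance of both closing conditions (inner and outer loop) precisely matches conditions (1)--(4) of Lemmas \ref{outer loop} and \ref{inner loop}, so that a successful run of the algorithm corresponds to a genuine periodic model as produced by Theorem \ref{periodicity}, and conversely any periodic model yields an accepting run. Everything else (atom consistency, one-step transition checks, binary counter arithmetic) is routine and space-efficient.
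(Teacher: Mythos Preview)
Your proposal is correct and follows essentially the same approach as the paper: PSPACE-hardness via the Sistla--Clarke result for the $L([1],\mathtt u)$-fragment, and PSPACE-membership via a nondeterministic on-the-fly guessing procedure that walks through the doubly periodic model guaranteed by Theorem~\ref{periodicity}, storing only a bounded number of atom-sets (your ``atoms'' are the paper's $S_*$ sets), binary counters, and pending $\mathtt u$/$\mathtt U$ obligations. The paper's pseudocode matches your description almost step for step, including the nested inner/outer loops and the fulfillment checks for $\mathtt u$- and $\mathtt U$-formulas; your explicit appeal to Savitch's theorem is implicit in the paper.
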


\begin{proof}
On the one hand, in \cite{SC85} is shown that the satisfiability of $L([1],\mathtt u)$-formulas is PSPACE complete, so the satisfiability of $L([1],[\omega],\mathtt u,\mathtt U)$ formulas is at least PSPACE hard. For the other direction, we will construct a nondeterministic Turing machine that determines satisfiability and uses polynomial space with respect to the length of a given formula.
Before the start of the actual description of the TM, we shall clarify the notation and terminology:

\begin{itemize}

\item $k,m$: nonnegative integers that have the same meaning as the corresponding numbers in Theorem \ref{periodicity} ($k$ is the size of the initial segment, while $m$ is the size of the outer loop);

\item $k_{\rm local}$, $m_{\rm local}$: local versions of $k$ and $m$  (they are locally restricted to the current inner loop $\{\langle i,n\rangle\ |\ n\in\omega\}$);

\item $S_{\rm start}$: subformulas guessed to be true at $\langle 0,0\rangle$;

\item $S_{\rm present}$: subformulas guessed to be true at the present moment;

\item $S_{[1]}$: subformulas guessed to be true at the next time instant $\langle i,j+1\rangle$;

\item $S_{[\omega]}$: subformulas guessed to be true at $\omega$-jump $\langle i+1,0\rangle$;

\item $S_{\rm in}$: subformulas guessed to be true at the beginning of the inner period $\langle i, k_{\rm loc}\rangle$;

\item $S_{\mathtt u}$: the set of all $\mathtt u$-formulas from $S_{\rm in}$;

\item $S_{\rm out}$: subformulas guesed at the beginning of the beginning outer period $\langle k,0\rangle$;

\item $S_{\mathtt U}$: the set of all $\mathtt U$-formulas from $S_{\rm out}$;

\item Any of sets $S_*$ is said to satisfy {\it Boolean consistency} iff for all $\psi,\theta\in Sub(\phi)$ the following is true:

\begin{itemize}

\item $\psi\in S_*$ or $\lnot \psi\in S_*$;

\item $\psi\in S_*\Leftrightarrow \lnot\psi\notin S_*$;

\item $\psi\land\theta\in S_*\Leftrightarrow \psi,\theta \in S_*$;

\end{itemize}

\item We say that  $S_{\rm present}$, $S_{[1]}$ and $S_{[\omega]}$ are {\it properly linked} iff for all $\psi,\theta\in Sub(\phi)$ the following conditions are satisfied:

\begin{itemize}

\item $[1]\psi\in S_{\rm present}\Leftrightarrow \psi\in S_{[1]}$;

\item $[\omega]\psi \in S_{\rm present}\Leftrightarrow \psi\in S_{[\omega]}$;

\item $\psi\,\mathtt u\,\theta\in S_{\rm present}$ iff $\theta\in S_{\rm present}$ or $\psi\land \lnot\theta\in S_{\rm present}$ and $\psi\,\mathtt u\,\theta\in S_{[1]}$;

\item $\psi\,\mathtt U\,\theta\in S_{\rm present}$ iff $\theta\in S_{\rm present}$ or $\mathtt g(\psi\land \lnot \theta)\in S_{\rm present}$ and $\psi\,\mathtt U\,\theta\in S_{[\omega]}$.

\end{itemize}

\end{itemize}
Our TM works as follows:

\begin{itemize}

\item[] input $\phi$;

\item[] guess $k$, $m$, $S_{\rm start}$ and  $S_{\rm out}$;

\item[] check Boolean consistency of $S_{\rm start}$ and  $S_{\rm out}$; if fails return {false};

\item[] if $\phi\notin S_{\rm start}$ return false;

\item[] construct $S_{\mathtt U}$;

\item[] $S_{\rm present}:=S_{\rm start}$;

\item[] for $i=0$ to $k+m-1$ do;

\begin{itemize}

\item[] guess $k_{\rm loc}$, $m_{\rm loc}$, $S_{\rm in}$;

\item[] if $i<k+m-1$ guess $S_{[\omega]}$; else
$S_{[\omega]}:=S_{\rm out}$;

\item[] check Boolean consistency of $S_{\rm in}$ and
$S_{[\omega]}$; if fails return false;

\item[] construct $S_{\mathtt u}$;

\item[] for $j=0$ to $k_{\rm loc}+m_{\rm loc}-1$ do;

\begin{itemize}

\item[] if $j<k_{\rm loc}+m_{\rm loc}-1$ guess $S_{[1]}$; else
$S_{[1]}:=S_{\rm in}$;

\item[] check Boolean consistency of $S_{[1]}$; if fails return false;

\item[] check whether $S_{\rm present}$, $S_{[1]}$ and $S_{[\omega]}$ are properly linked; if fails return false;

\item[] if $j\geqslant k_{\rm loc}$ then for all $\psi\,\mathtt u\,\theta\in S_\mathtt u$ check whether $\theta\in S_{\rm present}$; if pass, delete $\psi\,\mathtt u\,\theta$ from $S_\mathtt u$;

\item[] if $k\leqslant i<k+m-1$ then for all $\psi\,\mathtt U\,\theta\in S_\mathtt U$ check whether $\theta\in S_{\rm present}$; if pass, delete $\psi\,\mathtt U\,\theta$ from $S_\mathtt U$;

\item[] $S_{\rm present}:=S_{[1]}$;

\item[] next $j$;

\end{itemize}

\item[] if $S_\mathtt u \neq \emptyset$ return false;

\item[] $S_{\rm present}:=S_{[\omega]}$;

\item[] next $i$;

\end{itemize}

\item[] if $S_{\mathtt U}\neq\emptyset$ return false;

\item[] end.

\end{itemize}

It is easy to see that just described TM uses polynomial space with respect to ${\rm length}(\phi)$, so satisfiability problem for $L([1],[\omega],\mathtt u,\mathtt U)$-formulas is at most PSPACE hard. Since PSPACE is both upper and lower complexity bound, we have our claim.
\end{proof}

\section{Conclusion}

The starting point for this paper was prior research on modelling
zero-time transitions using infinitesimals \cite{GMM99}. We
expanded  linear time logic to the formalism adequate for
modelling those transitions. We proposed an axiomatization for the
logic, proved strong completeness and determined the complexity
for the satisfiability procedure. Since the logic is not compact,
the axiomatization contains infinitary rules of inference. As a
topic for further research, we propose developing generalization
of the logic $L([1],[\omega],\mathtt u,\mathtt U)$ to any
$\omega^n$, $n> 2$, with the addition of new unary temporal
operators $[\omega^2],\dots,[\omega^{n-1}]$ and binary temporal
operators $\mathtt U_1,\dots, \mathtt U_n$. Here $\mathtt
U_1=\mathtt u$ and $\mathtt U_2=\mathtt U$. Besides theoretical,
such generalization can be used for temporal description of
complex systems with many subsystem layers.

\subsection{Acknowledgements}
The authors are partially supported by Serbian ministry of education and science through grants III044006, III041103, ON174062 and TR36001.

\end{document}